\theoremstyle{plain}
\newtheorem{thm}{Theorem}[section]
\newtheorem{lem}[thm]{Lemma}
\newtheorem{prop}[thm]{Proposition}
\newtheorem{rmk}[thm]{Remark}
\newtheorem{ex}[thm]{Example}
\def\G{\mathscr{G}}
\def\P{\mathscr{P}}
\def\T{\mathscr{T}}
\def\d{\mathrm{d}}
\def\Cset{\mathbb{C}}
\def\Kset{\mathbb{K}}
\def\Lset{\mathbb{L}}
\def\Rset{\mathbb{R}}
\def\SL{\mathrm{SL}}
\def\id{\mathrm{id}}
\def\Re{\mathrm{Re}\,}
\def\epsilon{\varepsilon}
\DeclareMathOperator{\sech}{sech}
\def\theequation{\arabic{section}.\arabic{equation}}
\begin{document}


\title[Solvability of the Zakharov-Shabat systems by quadrature]
{Solvability of the Zakharov-Shabat systems
 with meromorphic potentials by quadrature}

\author{Kazuyuki Yagasaki}

\address{Department of Applied Mathematics and Physics, Graduate School of Informatics,
Kyoto University, Yoshida-Honmachi, Sakyo-ku, Kyoto 606-8501, JAPAN}
\email{yagasaki@amp.i.kyoto-u.ac.jp}

\date{\today}
\subjclass[2020]{35Q51, 37K15, 34M03, 34M15, 34M35, 34M40, 35P25, 37K40}
\keywords{Solvability by quadrature; Zakharov-Shabat system;
 integrable partial differential equation; inverse scattering transform;
 differential Galois theory; meromorphic potential; reflectionless potential}

\begin{abstract}
We study the solvability of the general two-dimensional Zakharov-Shabat (ZS) systems
 with meromorphic potentials by quadrature.
These systems appear in application of the inverse scattering transform (IST)
 to an important class of nonlinear partial differential equations (PDEs)
 called integrable systems.
Their solvability by quadrature is a key to obtain analytical expressions
 for solutions to the initial value problems of the integrable PDEs by using the IST.
We prove that the ZS systems are always integrable in the sense of differential Galois theory,
 i.e., solvable by quadrature,
 if and only if the meromporphic potentials are reflectionless,
 under the condition that  the potentials are absolutely integrable
 on $\Rset\setminus(-R_0,R_0)$ for some $R_0>0$.
Similar statements were previously proved to be true by the author
 for a limited class of potentials and the linear Schr\"odinger equations.
\end{abstract}
\maketitle


\section{Introduction}

In this paper we study the solvability
 of the two-dimensional \emph{Zakharov-Shabat} (ZS) \emph{systems},
\begin{equation}
w_x=
\begin{pmatrix}
-ik & q(x)\\
r(x) & ik
\end{pmatrix}w,\quad
w\in\Cset^2,
\label{eqn:ZS2}
\end{equation}
by quadrature, where $k\in\Cset$ is a constant,
 the subscript $x$ represents differentiation with respect to the variable $x$,
 and the potentials $q(x),r(x)$ are meromorphic functions
 defined in a domain including $\Rset$ in $\Cset$ such that
\begin{equation}
\lim_{x\to\pm\infty}q(x),r(x)=0.
\label{eqn:A1a}
\end{equation}
When $r(x)\equiv -1$, the ZS system \eqref{eqn:ZS2}
 reduces to the linear Schr\"odinger equation
 although $r(x)$ does not satisfy \eqref{eqn:A1a}.
See, e.g., Section~2 of \cite{Y23a}.
We also pay special attention to the case
 in which the potentials $q(x)$ and $r(x)$ are given by rational functions.
The sovability of \eqref{eqn:ZS2} by quadrature
 can be determined by the differential Galois theory \cite{CH11,PS03}.
See also Section~3 of \cite{Y23a}
 for a quick review of a necessary part of the theory.

As well-known (see, e.g., Section~1.2 of \cite{AS81}),
 the ZS system \eqref{eqn:ZS2} appears in application
 of the \emph{inverse scattering transform} (IST)
 for the following fundamental and important nonlinear
 partial differential equations (PDEs):
\begin{itemize}
\setlength{\leftskip}{-2.7em}
\item
The nonlinear Schr\"odinger (NLS) equation
\begin{equation}
iq_t=q_{xx}\pm 2|q|^2q
\label{eqn:NLS}
\end{equation}
with $r=\mp q^\ast$;
\item
The modified Korteweg-de Vries (mKdV) equation
\begin{equation}
q_t\pm 6q^2q_x+q_{xxx}=0
\label{eqn:mKdV}
\end{equation}
with $r=\mp q$;
\item
The sine-Gordon equation
\begin{equation}
u_{xt}=\sin u
\label{eqn:sineG}
\end{equation}
with $-q=r=\frac{1}{2}u_x$;
\item
The sinh-Gordon equation
\begin{equation}
u_{xt}=\sinh u
\label{eqn:sinhG}
\end{equation}
with $q=r=\frac{1}{2}u_x$.
\end{itemize}
Here $q,r$ and $u$ are assumed to depend on the time variable $t$ as well as $x$,
 and the superscript `$\ast$' represents complex conjugate.
When the minus sign is taken in \eqref{eqn:NLS} and \eqref{eqn:mKdV},
 the ZS system \eqref{eqn:ZS2} is typically considered under nonvanishing boundary conditions,
 in which $\lim_{x\to\pm\infty}q(x),r(x)=0$ does not hold.
See \cite{ZS73,O79} for more details.
So these two cases may have to be excluded in our discussions.
On the other hand,
 under the transformation $(x+t,x-t)\mapsto(x,t)$,
 the sine- and sinh-Gordon equations \eqref{eqn:sineG} and \eqref{eqn:sinhG}
 are changed to
\[
u_{tt}-u_{xx}+\sin u=0\quad\mbox{and}\quad
u_{tt}-u_{xx}+\sinh u=0,
\]
respectively, in the physical coordinate system.
The Korteweg-de Vries (KdV) equation
\begin{equation}
q_t+6qq_x+q_{xxx}=0
\label{eqn:KdV}
\end{equation}
can also be treated by the IST
 although a different type of the ZS systems or the linear Schr\"odinger equations
 need to be used (see, e.g., Chapter~9 of \cite{A11} or \cite{Y23a,Y23b}).

To solve the initial value problems (IVPs) of these PDEs using the IST,
 we need to obtain particular solutions called the \emph{Jost solutions}, which satisfy
\begin{equation}
\begin{split}
&
\phi(x;k)\sim
\begin{pmatrix}
1\\
0
\end{pmatrix}e^{-ikx},\quad
\bar{\phi}(x;k)\sim
\begin{pmatrix}
0\\
1
\end{pmatrix}e^{ikx}\quad\mbox{as $x\to-\infty$},\\
&
\psi(x;k)\sim
\begin{pmatrix}
0\\
1
\end{pmatrix}e^{ikx},\quad
\bar{\psi}(x;k)\sim
\begin{pmatrix}
1\\
0
\end{pmatrix}e^{-ikx}\quad\mbox{as $x\to+\infty$.}
\end{split}
\label{eqn:bc}
\end{equation}
to the ZS system \eqref{eqn:ZS2} for any $k\in\Cset^\ast:=\Cset\setminus\{0\}$.
Thus, the solvability of \eqref{eqn:ZS2} by quadrature
 for any $k\in\Cset^\ast$
 is a key to obtain analytical expressions for solutions to the IVPs of the integrable PDEs.

The solvability of \eqref{eqn:ZS2} by quadrature
 was recently discussed in \cite{Y23a} under the following condition:
\begin{enumerate}
\setlength{\leftskip}{-3.8mm}
\item[\bf(A${}_0$)]
The potentials $q(x),r(x)$ are holomorphic in a neighborhood $U$ of $\Rset$ in $\Cset$.
Moreover, there exist holomorphic functions $q_\pm,r_\pm:U_0\to\Cset$
 such that $q_\pm(0),r_\pm(0)=0$ and
\[
q(x)=q_\pm(e^{\mp\lambda_\pm x}),\quad
r(x)=r_\pm(e^{\mp\lambda_\pm x})
\]
for $|\Re x|$ sufficiently large,
 where $U_0$ is a neighborhood of the origin in $\Cset$,
 $\lambda_\pm\in\Cset$ are some constants with $\Re\lambda_\pm>0$,
 and the upper or lower signs are taken simultaneously
 depending on whether $\Re x>0$ or $\Re x<0$.
\end{enumerate}
In particular, $q(x),r(x)$ tend to zero exponentially as $x\to\pm\infty$
 on $\Rset$, so that $u\in L^1(\Rset)$, if they satisfy condition~(A${}_0$).
Condition~(A${}_0$) is a little restrictive,
 but it is satisfied by several wide classes of functions.
For example, if $q(x),r(x)$ are rational functions of $e^{\lambda x}$
 for some $\lambda\in\Cset$ with $\Re\lambda>0$,
 have no singularity on $\Rset$, and $q(x),r(x)\to 0$ as $x\to\pm\infty$,
 then condition~(A${}_0$) holds.
See Section~1 of \cite{Y23a} for another nontrivial example of $q(x),r(x)$
 satisfying condition~(A${}_0$).

Define the \emph{scattering coefficients}
 $a(k),\bar{a}(k),b(k),\bar{b}(k)$ for \eqref{eqn:ZS2} as
\begin{equation}
\begin{split}
\phi(x;k)=& b(k)\psi(x;k)+a(k)\bar{\psi}(x;k),\\
\bar{\phi}(x;k)=&\bar{a}(k)\psi(x;k)+\bar{b}(k)\bar{\psi}(x;k).
\end{split}
\label{eqn:ab}
\end{equation}
When $a(k),\bar{a}(k)\neq 0$, the constants
\[
\rho(k)=b(k)/a(k),\quad
\bar{\rho}(k)=\bar{b}(k)/\bar{a}(k)
\]
are defined and called the \emph{reflection coefficients} for \eqref{eqn:ZS2}.
The potentials $q(x),r(x)$ are called \emph{reflectionless}
 if $b(k),\bar{b}(k)=0$ for any $k\in\Rset^\ast:=\Rset\setminus\{0\}$.
The following result was proved in \cite{Y23a}.

\begin{thm}
\label{thm:Y}
Suppose that condition~{\rm(A${}_0$)} holds.
If the potentials $q(x),r(x)$ are reflectionless,
 then Eq.~\eqref{eqn:ZS2} is solvable by quadrature
 for any $k\in\Cset^\ast$.
Conversely, 
 if Eq.~\eqref{eqn:ZS2} is solvable by quadrature for any $k\in\Rset^\ast$,
 then the potentials $q(x),r(x)$ are reflectionless.
 \end{thm}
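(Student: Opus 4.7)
The plan is to exploit differential Galois theory. Since the coefficient matrix in \eqref{eqn:ZS2} has zero trace, the Galois group $G$ over an appropriate Picard--Vessiot base sits inside $\SL(2,\Cset)$, and the Lie--Kolchin theorem identifies solvability of \eqref{eqn:ZS2} by quadrature with solvability of the identity component $G^0$. Condition~(A${}_0$) is tailored so that the substitutions $\xi_\pm=e^{\mp\lambda_\pm x}$ turn \eqref{eqn:ZS2}, near $x=\pm\infty$, into a linear system with a regular singularity at $\xi_\pm=0$; since $q(x),r(x)\to 0$ there, the leading residue is the diagonal matrix $\mathrm{diag}(\pm ik,\mp ik)/\lambda_\pm$, and the Jost solutions of \eqref{eqn:bc} are precisely the local eigenvectors of the monodromies at these two singular points.

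For the direction ``reflectionless $\Rightarrow$ solvable by quadrature'', the hypothesis $b(k)=\bar{b}(k)=0$ on $\Rset^\ast$ propagates to generic $k\in\Cset^\ast$ by the analyticity of the scattering coefficients in $k$, and \eqref{eqn:ab} degenerates to $\phi=a\bar{\psi}$, $\bar{\phi}=\bar{a}\psi$. The eigenbases diagonalizing the monodromy at $x=-\infty$ and at $x=+\infty$ therefore agree up to permutation and scaling. In this common basis both asymptotic monodromy generators are diagonal, and, together with the contributions of any interior poles of the meromorphic potentials (controlled by condition~(A${}_0$)) interpreted in the same frame, one places $G^0$ inside the diagonal torus of $\SL(2,\Cset)$, which is abelian and hence solvable.

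Conversely, suppose $b(k)\neq 0$ or $\bar{b}(k)\neq 0$ for some $k\in\Rset^\ast$. Then the transition between the Jost bases at $\mp\infty$, given by $\begin{pmatrix}a&\bar{b}\\b&\bar{a}\end{pmatrix}$, is non-diagonal, and expressing both local monodromies in a single basis of solutions yields two semisimple elements of $\SL(2,\Cset)$ that share no eigenvector. A pair of such elements generates a Zariski-dense subgroup of $\SL(2,\Cset)$, forcing $G=\SL(2,\Cset)$, whose identity component is simple and non-solvable; thus \eqref{eqn:ZS2} fails to be solvable by quadrature at this $k$, which is the contrapositive of the claim.

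The main obstacle is the rigorous identification of $G$ with the group generated by these two asymptotic monodromies over the correct base field: one must pin down the Picard--Vessiot base, verify that the formal normal forms at $\xi_\pm=0$ are in fact realized globally by the Jost solutions along $\Rset$, and rule out interference from interior poles of $q,r$ or Stokes phenomena that would either shrink $G^0$ in the reflective case or enlarge it in the reflectionless one. This is precisely where the detailed asymptotic analysis underlying \eqref{eqn:bc} and the explicit form of condition~(A${}_0$) do their work.
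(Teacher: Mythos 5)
Your framework (extract the Galois group from local data at $x=\pm\infty$ and the connection matrix $\left(\begin{smallmatrix}a&\bar b\\ b&\bar a\end{smallmatrix}\right)$) is the right circle of ideas --- note that the present paper does not reprove Theorem~\ref{thm:Y} but imports it from \cite{Y23a}, and proves its analogues (Theorems~\ref{thm:main1} and \ref{thm:main2}) by a constructive reflectionless-IST formula and by a Stokes-matrix/Ramis argument respectively --- but as written your argument has a genuine gap in each direction. In the converse, the step ``two semisimple elements of $\SL(2,\Cset)$ sharing no eigenvector generate a Zariski-dense subgroup'' is false: case (iv) of Proposition~\ref{prop:2c}, the normalizer of the torus, is generated by exactly such a pair, e.g.\ $\mathrm{diag}(\lambda,\lambda^{-1})$ together with the antidiagonal matrix with entries $-\beta^{-1}$ and $\beta$, and it is solvable; the finite binary dihedral groups give further counterexamples. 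So a non-diagonal scattering matrix does not by itself force $G=\SL(2,\Cset)$. One has to walk through the classification of Proposition~\ref{prop:2c} and eliminate cases (i)--(v) one by one, using $a\bar a-b\bar b=1$, the non-vanishing of $a,\bar a$ off a discrete set, the fact that for $k\in\Rset^\ast$ the local exponents $\pm ik/\lambda_\pm$ give non-torsion monodromies, and finally the symmetries of Proposition~\ref{prop:2b} together with the identity theorem to pass from ``$b$ or $\bar b$ vanishes off a discrete set'' to ``$b\equiv\bar b\equiv 0$''. This bookkeeping is precisely what Section~4 of the paper does (with Stokes matrices in place of monodromies under (A$_1$)); it cannot be skipped.

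In the forward direction, simultaneous diagonality of the two asymptotic monodromies does not place $G^0$ inside the torus, because the differential Galois group over the relevant Picard--Vessiot base is not generated by those two elements alone; you flag this yourself as ``the main obstacle'' and then leave it unresolved, but it is the crux rather than a technicality (a local-to-global statement such as Ramis's theorem gives you that $G$ \emph{contains} the local data, which is useful for lower bounds on $G$ in the converse, not for the upper bound you need here). The paper's route, in Section~3 and in \cite{Y23a,Y23b}, sidesteps this entirely: when $b,\bar b\equiv 0$ one applies the projections $\P^{\pm}$ to \eqref{eqn:MN2b}, reduces \eqref{eqn:MN2d} by residues to a finite linear system at the zeros of $a$ and $\bar a$, and solves it to exhibit the Jost solutions explicitly as rational functions of $x$ and finitely many exponentials $e^{ik_jx}$, $e^{i\bar k_jx}$ --- manifestly Liouvillian, hence solvable by quadrature for every $k\in\Cset^\ast$. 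I recommend recasting your forward direction along these constructive lines rather than attempting to bound $G^0$ abstractly.
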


In this paper we consider the case in which $q(x),r(x)$ are meromorphic,
 and extend the result of \cite{Y23a}. 
More precisely, we assume the following.

\begin{enumerate}
\setlength{\leftskip}{-3.8mm}
\item[\bf(A${}_1$)]
The potentials $u=q(x),r(x)$ are meromorphic in a neighborhood of $\Rset$ in $\Cset$ and
\begin{equation}
\int_{\Rset\setminus(-R_0,R_0)}|u(x)|\d x<\infty\quad
\mbox{for some $R_0>0$.}
\label{eqn:A1}
\end{equation}
\end{enumerate}

If Eq.~\eqref{eqn:A1} holds, then so does condition~\eqref{eqn:A1a}
 and by Theorem~8.1 in Section~3.8 of \cite{CL55}
 there exist the Jost solutions $\phi(x;k),\psi(x;k)$ satisfying \eqref{eqn:bc}
 for $k\in\Cset^\ast$. 
See also Section~2 of \cite{Y23a}.
On the other hand,
 meromorphic functions that are not analytic have singular points.
Singular solitons called \emph{positons}, \emph{negatons} and \emph{complexitons}
 have attracted much attention and been studied
 for the mKdV equation \eqref{eqn:mKdV} in \cite{RSK96,S92,WZF08}
 and the sine-Gordon equation \eqref{eqn:sineG} in \cite{B93,JZ95,WF07}.
They were found to be reflectionless in many cases \cite{B93,JZ95,S92,WZF08}
 although not all.
Rational solutions have also been studied
 for the NLS equation \eqref{eqn:NLS} and the mKdV equation \eqref{eqn:mKdV}
 \cite{C06a,C06b,NH85,H97}.
See \cite{Y23b} for references of related work on the KdV equation \eqref{eqn:KdV}.

Let $\Cset_\pm=\{k\in\Cset\mid\pm\mathrm{Im}\,k>0\}$.
Our main results are stated as follows.

\begin{thm}
\label{thm:main1}
Suppose that condition~{\rm(A${}_1$)} holds
 and $a(k)$ and $\bar{a}(k)$ have zeros in $\Cset_+$ and $\Cset_-$, respectively.
If the potentials $q(x),r(x)$ are reflectionless,
 then Eq.~\eqref{eqn:ZS2} is solvable by quadrature for any $k\in\Cset^\ast$.
\end{thm}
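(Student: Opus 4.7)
My plan is to combine the inverse scattering transform with the differential Galois criterion recalled in Section~3 of \cite{Y23a}. The guiding idea is that a reflectionless potential produces Jost solutions that can be written in closed form using only exponentials and rational operations, and hence generates a Picard--Vessiot extension lying inside a Liouvillian tower, from which solvability by quadrature follows.

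First, I would invoke condition~(A${}_1$) and Theorem~8.1 of \cite{CL55} (as the excerpt already notes) to guarantee existence of the Jost solutions $\phi,\bar\phi,\psi,\bar\psi$ for every $k\in\Cset^\ast$, with $\phi(x;k),\psi(x;k)$ analytic in $k\in\Cset_+$ and $\bar\phi(x;k),\bar\psi(x;k)$ analytic in $k\in\Cset_-$. Denote by $\{k_j\}_{j=1}^N\subset\Cset_+$ and $\{\bar k_j\}_{j=1}^{\bar N}\subset\Cset_-$ the zeros of $a(k)$ and $\bar a(k)$, which exist by hypothesis, and let $c_j,\bar c_j$ be the associated norming constants. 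Since $b(k)=\bar b(k)=0$ for $k\in\Rset^\ast$, the Marchenko kernels reduce to the purely discrete sums $F_\pm(x)=\mp i\sum_j c_j e^{\pm i k_j x}$, and the Gelfand--Levitan--Marchenko (GLM) integral equations degenerate to finite linear algebraic systems.

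Second, Cramer's rule solves those systems, giving $\psi(x;k)$ (and analogously $\phi(x;k)$) as a ratio of determinants whose entries are exponentials in $x$ with spectral parameters drawn from $\{k_j,\bar k_j,k\}$. Two linearly independent solutions of~\eqref{eqn:ZS2} are thereby available in closed form for every $k\in\Cset^\ast$, and their Picard--Vessiot extension over $\Cset(x)$ is contained in the Liouvillian tower generated by the exponentials $e^{ik_jx},\,e^{i\bar k_jx},\,e^{ikx}$. The differential Galois criterion of \cite{CH11,PS03} then yields solvability of~\eqref{eqn:ZS2} by quadrature.

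The main obstacle will be to justify this GLM reduction in the meromorphic setting, since (A${}_1$) only gives integrability of $q,r$ on $\Rset\setminus(-R_0,R_0)$ and permits poles on the real line, whereas the classical reflectionless reconstruction is usually set up on all of $\Rset$ with globally $L^1$ potentials. I would carry out the GLM argument first on the unbounded components of $\Rset$ minus the pole set, obtaining the explicit determinantal Jost solutions there, and then exploit the uniqueness of the reconstruction from reflectionless spectral data to extend those expressions meromorphically across the isolated poles; equivalently, one may view this extension as a sequence of Darboux transformations applied to the trivial potential $q=r=0$, indexed by the discrete data $\{(k_j,c_j),(\bar k_j,\bar c_j)\}$. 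The closed-form nature of the expressions is precisely what permits such a continuation, but verifying that the left- and right-side reconstructions match globally through the inner region is where the bulk of the technical work will lie.
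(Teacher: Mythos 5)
Your overall strategy---reduce the reflectionless case to a finite linear-algebraic system whose solution exhibits the Jost functions as rational expressions in $x$ and exponentials, then conclude Liouvillian solvability---is the same endpoint the paper reaches, but the route you choose (Gelfand--Levitan--Marchenko equations on the real line plus Cramer's rule) leaves the central difficulty unresolved, and you say so yourself. Under (A${}_1$) the potentials may have poles on $\Rset$, so the GLM integral equations are not even well posed on $\Rset$, and your two proposed repairs are sketches rather than arguments: doing GLM separately on the unbounded components and then ``matching through the inner region'' has no mechanism for producing a single global formula, and the Darboux-transformation alternative presupposes the very determinantal formulas you are trying to derive. A further, more minor gap: your discrete kernels $F_\pm(x)=\mp i\sum_j c_j e^{\pm ik_jx}$ implicitly assume all zeros of $a,\bar a$ are simple, whereas nothing in the hypotheses guarantees this; zeros of multiplicity $\nu_j>1$ force you to carry the derivatives $\partial_k^r N(x;k_j)$, $r=0,\dots,\nu_j-1$, as additional unknowns (with polynomial-in-$x$ coefficients), not just norming constants.

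The paper's proof avoids the GLM equations entirely, and this is precisely how it handles the poles. It deforms the $x$-variable onto a curve $\Gamma=\{\xi+ic\sech\xi\}\subset\Cset_+$ avoiding the (discrete) pole set, regards \eqref{eqn:ZS2} as an ODE on $\Gamma$, and works in the spectral variable: writing $M=\phi e^{ikx}$, $N=\psi e^{-ikx}$, etc., the scattering relations become $M=a\bar N+bNe^{2ikx}$ and $\bar M=\bar a N+\bar b\bar N e^{-2ikx}$, and applying the Cauchy projections $\P^\pm$ in $k$ to these identities (after dividing by $a$, $\bar a$) gives \eqref{eqn:MN2d}. Because $b,\bar b=0$ on $\Rset^\ast$ there is no jump across the real axis, and the projections contribute only residues at the finitely many zeros $k_j,\bar k_j$; evaluating the resulting identities and their $k$-derivatives at those zeros closes a finite linear system for $N_j^r,\bar N_j^r$ whose solution is rational in $x$ and the exponentials $e^{2ik_jx}$, $e^{2i\bar k_jx}$. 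If you want to salvage your GLM route you would need to supply an analogue of this contour deformation; as written, the step from ``reflectionless'' to ``degenerate kernel reconstruction valid for meromorphic $q,r$'' is a genuine gap.
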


\begin{thm}
\label{thm:main2}
Suppose that condition~{\rm(A${}_1$)} holds,
 $q(x),r(x)$ are analytic in a neighborhood of $x=\infty$
 in the Riemann sphere $\Cset\cup\{\infty\}$,
 and one of the following conditions holds$\,:$
\begin{itemize}
\setlength{\leftskip}{-1.6em}
\item[(i)]
$r(x)=q(x);$
\item[(ii)]
$r(x)=-q(x);$
\item[(iii)]
$r(x)=q(x)^\ast;$
\item[(iv)]
$r(x)=-q(x)^\ast$.
\end{itemize}
If Eq.~\eqref{eqn:ZS2} is solvable by quadrature for any $k\in\Cset^\ast$,
 then the potentials $q(x),r(x)$ are reflectionless.
\end{thm}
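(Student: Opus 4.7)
My plan is to extend the differential Galois argument of \cite{Y23a} from condition~(A${}_0$) to the weaker condition~(A${}_1$), using the analyticity at $x=\infty$ and the symmetries (i)--(iv) to compensate for the loss of exponential decay of the potentials.

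First, I would set up the Galois-theoretic framework. Since $q,r$ are meromorphic in a neighborhood of $\Rset$, analytic at $\infty$, and vanish there by \eqref{eqn:A1a}, they extend to a meromorphic system on a domain of the Riemann sphere containing $\Rset\cup\{\infty\}$, and $x=\infty$ is an irregular singular point of \eqref{eqn:ZS2} of Poincar\'e rank one, with formal fundamental matrix of the form $(I+O(1/x))\,\mathrm{diag}(e^{-ikx},e^{ikx})$. For each $k\in\Cset^\ast$, solvability by quadrature is equivalent (by Section~3 of \cite{Y23a}) to the identity component $G(k)^\circ$ of the differential Galois group $G(k)\subset\SL_2(\Cset)$ being solvable, which by the Lie--Kolchin theorem yields a one-dimensional $G(k)^\circ$-invariant subspace $L(k)$ of the solution space of \eqref{eqn:ZS2}.

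Second, I would identify $L(k)$ with a Jost line. The invariance of $L(k)$ rules out genuine superpositions of the two formal exponentials at $\infty$, so Stokes-sector asymptotics at the irregular point force every solution in $L(k)$ to behave like a single exponential $e^{\pm ikx}$ (times a constant eigenvector) along each of $x\to\pm\infty$. The Jost solutions in \eqref{eqn:bc} are characterized precisely by such single-exponential normalizations, so $L(k)$ must coincide with the span of $\phi(\,\cdot\,;k)$ at $x=-\infty$ and with either the $\psi$-line or the $\bar\psi$-line at $x=+\infty$; via the connection formulae \eqref{eqn:ab}, these two alternatives correspond to $a(k)=0$ and $b(k)=0$ respectively.

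Third, the symmetries (i)--(iv) rule out the $a(k)=0$ alternative for generic $k\in\Rset^\ast$. Each of $r=\pm q$ and $r=\pm q^\ast$ induces an involutive automorphism of \eqref{eqn:ZS2} (paired with a sign flip or complex conjugation in $k$) that interchanges the pairs $(\phi,\bar\phi)$ and $(\psi,\bar\psi)$, forcing $L(k)$ to transform consistently under this involution; a case-by-case check should show that only the $\bar\psi$-alignment survives for generic real $k$, since any zeros of $a(k)$ and $\bar a(k)$ on $\Rset^\ast$ are at most isolated. Applying the same argument to $\bar\phi$ gives $\bar b(k)=0$, so $b(k)=\bar b(k)=0$ on a dense subset of $\Rset^\ast$, hence identically by the analyticity of the scattering coefficients.

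The principal obstacle is the second step: matching the abstract Galois-invariant line $L(k)$ with a specific Jost solution defined by its asymptotic normalization at $\pm\infty$. Under (A${}_0$) the exponential decay of $q,r$ made this matching essentially automatic; under only (A${}_1$), where the potentials decay merely integrably on $\Rset$ and may carry finite poles extending towards $\pm\infty$, I expect to need a more delicate Stokes and normal-form analysis at the rational irregular point $x=\infty$, together with a propagation argument past the finite singularities of $q,r$, in order to pin down the required single-exponential asymptotics of elements of $L(k)$.
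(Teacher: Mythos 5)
Your overall strategy --- differential Galois theory at the irregular singular point $x=\infty$, reduction to a triangularizability statement for the Galois group, and then the symmetries (i)--(iv) plus analyticity of the scattering coefficients to propagate the conclusion to all of $\Rset^\ast$ --- is the same as the paper's. But the step you yourself flag as ``the principal obstacle'' is exactly where the entire content of the proof lies, and your proposal does not close it. The paper does not try to match an abstract Lie--Kolchin invariant line with a Jost line by sectorial asymptotics; instead it computes the local Galois data at $y=1/x=0$ explicitly and invokes Ramis' density theorem: the differential Galois group $G$ contains the Zariski closure of the group generated by the formal monodromy (which is trivial here), the exponential torus $\T=\{\mathrm{diag}(c,c^{-1})\}$, and the Stokes matrices. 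The decisive observation is that $(\phi,\bar\phi)$ and $(\bar\psi,\psi)$ are precisely the sectorial fundamental matrices realizing the formal fundamental solution $\tilde V(y)e^{Q(y)}$ in the two sectors around the singular directions, so the connection formula \eqref{eqn:ab} identifies the Stokes matrix as
\[
S_-=\begin{pmatrix} a(k) & \bar b(k)\\ b(k) & \bar a(k)\end{pmatrix},\qquad S_+=S_-^{-1}.
\]
With $\T\subset G$ ruling out the finite cases of Proposition~\ref{prop:2c} and $a(k),\bar a(k)\neq 0$ ruling out the anti-diagonal case (iv), integrability forces $G$ into case (iii) or (v), i.e.\ $S_-$ triangular in the formal basis, i.e.\ $b(k)=0$ or $\bar b(k)=0$. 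Without this identification of the Stokes matrices with the scattering matrix, your assertion that the invariant line ``must coincide with the span of $\phi$ at $x=-\infty$ and with either the $\psi$-line or the $\bar\psi$-line at $x=+\infty$'' is unsupported; note also that under (A${}_1$) alone the potentials need not decay exponentially, so a naive sectorial-asymptotics matching of actual solutions to formal exponentials is genuinely delicate, which is why the paper works with the formal/Stokes apparatus rather than with asymptotics of $\phi,\psi$ directly.

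A second, smaller gap: the Galois argument yields only the disjunction ``$b(k)=0$ or $\bar b(k)=0$'' at each $k\in\Rset^\ast$ with $a(k),\bar a(k)\neq 0$; you cannot simply ``apply the same argument to $\bar\phi$'' to get both, because when $G$ is of type (v) (triangular, non-diagonal) there is only one invariant line. This is where conditions (i)--(iv) are actually used in the paper: Proposition~\ref{prop:2b} gives $\bar b(k)=\pm b(-k)$ or $\pm b(k^\ast)^\ast$, so the disjunction becomes a statement about the zero set of the single analytic function $b$, and the identity theorem then forces $b\equiv 0$, hence $\bar b\equiv 0$. Your step 3 instead tries to use the symmetries to exclude the $a(k)=0$ alternative, which is unnecessary (the zeros of $a,\bar a$ are already discrete by Proposition~\ref{prop:2a}(iii) and \eqref{eqn:2b}) and misses the actual role the symmetry hypotheses play.
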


\begin{rmk}\
\label{rmk:1b}
\begin{itemize}
\setlength{\leftskip}{-1.6em}
\item[(i)]
The scattering coefficients $a(k),\bar{a}(k)$ may have no zero
 in $\Cset_+$ and $\Cset_-$, respectively,
 even when condition {\rm(A${}_1$)} holds and $b(k),\bar{b}(k)=0$ for any $k\in\Rset^\ast$.
In this case, Theorem~$\ref{thm:main1}$ does not apply.
\item[(ii)]
One of conditions~{\rm(i)-(iv)} in Theorem~$\ref{thm:main2}$ holds
 for the integrable PDEs \eqref{eqn:NLS}-\eqref{eqn:sinhG}.
If one of the conditions holds,
 then the number of zeros is the same for $a(k)$ and $\bar{a}(k)$. 
See Proposition~$\ref{prop:2b}$.
\end{itemize}
\end{rmk}

Results similar to Theorems~\ref{thm:main1} and \ref{thm:main2}
 were very recently obtained for the linear Schr\"odinger equation
\begin{equation}
v_{xx}+(k^2+q(x))v=0,\quad
v\in\Cset,
\label{eqn:SE}
\end{equation}
which appears in application of the IST to the KdV equation \eqref{eqn:KdV}, in \cite{Y23b}
(see also Theorem~\ref{thm:KdV} below).
Here we extend the results to the ZS system \eqref{eqn:ZS2}.
In \cite{Y23b}, rational potentials that tend to zero as $x\to\pm\infty$ but do not satisfy \eqref{eqn:A1}
 were also discussed and Eq.~\eqref{eqn:SE} was shown not to be solvable by quadrature
 for some $k\in\Rset^\ast$ in that case.
However, it is difficult to obtain such a result for the ZS system \eqref{eqn:ZS2},
 as explained in Section~5 below.

The outline of this paper is as follows.
We collect preliminary results in Section~2,
 and give proofs of Theorems~\ref{thm:main1} and \ref{thm:main2}
 in Sections~3 and 4, respectively.
Finally, we discuss rational potentials possessing the properties stated above in Section~5.


\section{Preliminary Results}
In this section we give preliminary results
 required to prove Theorems~\ref{thm:main1} and \ref{thm:main2}.
Henceforth we assume that condition~(A${}_1$) holds.

Since the trace of the coefficient matrix in \eqref{eqn:ZS2} is zero,
 we see by \eqref{eqn:bc} that the Wronskian of $\phi(x)$ and $\bar{\phi}(x)$
 (resp. of $\psi(x)$ and $\bar{\psi}(x)$) is one, i.e.,
\begin{equation}
\det(\phi(x;k),\bar{\phi}(x;k))=\det(\bar{\psi}(x;k),\psi(x;k))=1.
\label{eqn:W}
\end{equation}
Hence, it follows from \eqref{eqn:ab} that
\begin{equation}
a(k)\bar{a}(k)-b(k)\bar{b}(k)=1.
\label{eqn:2b}
\end{equation}
We have the following properties of the scattering coefficients. 

\begin{prop}\
\label{prop:2a}
\begin{itemize}
\setlength{\leftskip}{-1.6em}
\item[(i)]
$a(k),\bar{a}(k),b(k),\bar{b}(k)$ are analytic in $\Rset^\ast;$
\item[(ii)]
$a(k)$ and $\bar{a}(k)$ can be analytically continued
 in $\Cset_+$ and $\Cset_-$, respectively$\,;$
\item[(iii)]
$a(k)$ and $\bar{a}(k)$ only have finitely many zeros
 in $\Cset_+\cup\Rset$ and $\Cset_-\cup\Rset$, respectively.
\end{itemize}
\end{prop}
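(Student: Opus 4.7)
I would follow the Volterra integral equation approach of standard inverse scattering theory (as in \cite{AS81,CL55}), adapted to the meromorphic setting of condition~{\rm(A${}_1$)}. The first step is to strip off the prescribed exponentials by setting $M(x;k) = \phi(x;k)e^{ikx}$ and $N(x;k) = \psi(x;k)e^{-ikx}$, with analogous definitions for $\bar M, \bar N$, so that these modified vectors tend to the constant columns $(1,0)^\top$ or $(0,1)^\top$ at the appropriate end of the real axis. Substituting into \eqref{eqn:ZS2} and integrating from $\mp\infty$ produces coupled Volterra integral equations in which the only $k$-dependence is through the factor $e^{2ik(x-y)}$ appearing on the off-diagonal. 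For $k \in \overline{\Cset_+}$ and $y \le x$, this factor has modulus at most one, so under \eqref{eqn:A1} the Picard iteration converges uniformly on compact subsets of $\overline{\Cset_+}$; each iterate being entire in $k$, the limits $M(x;k)$ and $N(x;k)$ are analytic in $\Cset_+$ and continuous on $\overline{\Cset_+}$. The identity $a(k) = \det(\phi(x;k),\psi(x;k))$, which follows from \eqref{eqn:ab} and \eqref{eqn:W}, then yields (ii); assertion (i) follows because on $\Rset^*$ all four Jost solutions are well-defined, so $a,\bar a, b, \bar b$ are real analytic there (the coefficients $b,\bar b$ arise as determinants involving Jost solutions from opposite infinities).

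For part (iii), I would establish that $a(k) \to 1$ as $|k|\to\infty$ in $\overline{\Cset_+}$. This is a Riemann--Lebesgue type argument applied to the integral equation for $M$: the kernel factor $e^{2ik(x-y)}$ decays uniformly for $\Im k > 0$ and oscillates rapidly for $k \in \Rset^*$, so every Neumann iterate beyond the zeroth vanishes in the limit and $M \to (1,0)^\top$, and similarly $N \to (0,1)^\top$. Hence the zero set of $a$ in $\overline{\Cset_+}$ is contained in a compact set. Since $a$ is analytic and not identically zero (as it tends to $1$ at infinity), its zeros form an isolated set, and a bounded isolated set is finite. The same argument applies to $\bar a$ in $\overline{\Cset_-}$. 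Possible accumulation of real zeros toward $k=0$ is ruled out by continuity of $a$ at the origin, which again follows from the integrability bound \eqref{eqn:A1}.

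The main technical obstacle is that meromorphic potentials satisfying {\rm(A${}_1$)} typically have poles on $\Rset$, so the integrations above cannot be performed along $\Rset$ itself. I would handle this by deforming the integration contour slightly into $\Cset$ to pass around each pole in $(-R_0,R_0)$, using that $q,r$ are meromorphic in a full complex neighborhood of $\Rset$ and that \eqref{eqn:A1} provides integrability outside $(-R_0,R_0)$. Cauchy's theorem ensures that the resulting Jost solutions and scattering coefficients do not depend on the choice of deformation up to homotopy class in the pole-punctured neighborhood, so with a fixed convention all the $k$-analytic structure is preserved and the arguments of the previous two paragraphs go through unchanged.
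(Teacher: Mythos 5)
Your overall strategy is the same as the paper's: deform off the real axis to avoid the poles of $q,r$ (the paper uses a single global curve $\gamma(\xi)=\xi+ic\sech\xi$ asymptotic to $\Rset$, you use local detours around the finitely many poles in $(-R_0,R_0)$ --- these are equivalent, since \eqref{eqn:A1} forbids real poles outside $(-R_0,R_0)$), then read off (i) and (ii) from the Wronskian formulas $a(k)=\det(\phi,\psi)$, etc., and get (iii) from the large-$|k|$ limit $a(k),\bar a(k)\to 1$ together with an identity-theorem argument. Where you differ is in the machinery: you carry out the Volterra/Picard iteration for $M=\phi e^{ikx}$, $N=\psi e^{-ikx}$ explicitly and obtain the asymptotics by a Riemann--Lebesgue argument, whereas the paper cites Coddington--Levinson for existence and analyticity of the Jost solutions and derives \eqref{eqn:ab3} by the rescaling $\xi\mapsto k\xi$, under which the system degenerates to the constant diagonal one. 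Your version is more self-contained; the paper's is shorter but leans on references.

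There is, however, one genuine soft spot in your part (iii). The assertion ``its zeros form an isolated set, and a bounded isolated set is finite'' is false as stated ($\{1/n\}_{n\ge 1}$ is bounded and consists of isolated points). What you actually need is that the zero set cannot accumulate at any point of a region in which $a$ is \emph{analytic}; combined with $a\to 1$ at infinity this confines the zeros to a compact subset of that region and finiteness follows. Since you have analyticity on $\Cset_+\cup\Rset^\ast$, the only problematic point is $k=0$, and ``continuity of $a$ at the origin'' does not rule out accumulation of zeros there: if $a(0)=0$, a sequence of zeros could still converge to $0$ without violating continuity. The paper closes this by invoking the fact (Remark~4.2(i) of \cite{Y23a}) that the Jost solutions exist at $k=0$ and the scattering coefficients are \emph{analytic} at $k=0$, so the identity theorem applies there as well and would force $a\equiv 0$, contradicting \eqref{eqn:ab3}. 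You should upgrade your claim at the origin from continuity to analyticity --- which your Volterra construction can in fact deliver, since the iteration converges for $k$ in a full neighborhood of $0$ and each iterate is entire in $k$ --- or cite the corresponding statement from \cite{Y23a}. With that repair your argument is complete.
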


Similar results are obtained in Section~4.1 (especially Proposition~4.1) of \cite{Y23a}
 and Section~2 (especially Proposition~2.1) of \cite{Y23b}, respectively,
 when $q(x),r(x)$ are analytic on $\Rset$
 and when the ZS system \eqref{eqn:ZS2} reduces to the linear Schr\"odinger equations,
 i.e., $r(x)\equiv -1$.

\begin{proof}
Let $S$ denote the set of poles of $q(x)$ and $r(x)$.
Then $S$ is discrete and contains no accumulation point.
Choose a curve $\Gamma=\{\gamma(\xi)\in\Cset_+\mid \xi\in\Rset\}$ with
\[
\gamma(\xi)=\xi+ic\sech\xi
\]
does not intersect $S$.
By (A${}_1$) $q(x),r(x)$ are analytic on $\Gamma$.

Regard \eqref{eqn:ZS2} as a differential equation on $\Gamma$
 and let $U$ be a neighborhood of $\Gamma$ in $\Cset_+$ such that $S\cap U=\emptyset$.
Then its solutions $v=\phi(x;k)$ and $\psi(x;k)$ are bounded and analytic
 in $k\in\Rset^\ast$ as well as in $x\in U$.
By \eqref{eqn:ab} and \eqref{eqn:W} we have
\begin{equation}
\begin{split}
&
a(k)=\det(\phi(x;k),\psi(x;k)),\quad
\bar{a}(k)=\det(\bar{\psi}(x;k), \bar{\phi}(x;k)),\\
&
b(k)=\det(\bar{\psi}(x;k),\phi(x;k)),\quad
\bar{b}(k)=\det(\bar{\phi}(x;k),\psi(x;k)).
\end{split}
\label{eqn:prop2a1}
\end{equation}
Hence, we obtain part~(i).
Similarly we see that $\phi(x;k)$ and $\psi(x;k)$
 (resp. $\bar{\phi}(x;k)$ and $\bar{\psi}(x;k)$) are bounded and analytic
 in $\Cset_+$ (resp, in $\Cset_-$) for $x\in U$ satisfying $|x|<R$ with some $R>0$.
This yields part~(ii) along with \eqref{eqn:prop2a1}.

We turn to part (iii).
The ZS system  \eqref{eqn:ZS2} is rewritten as
\begin{equation}
w_\xi=\gamma_\xi(\xi)
\begin{pmatrix}
-ik & q(\gamma(\xi))\\
r(\gamma(\xi)) & ik
\end{pmatrix}w,
\label{eqn:prop2a2}
\end{equation}
on $\Gamma$.
Changing the independent variable as $\xi\mapsto k\xi$ in \eqref{eqn:prop2a2}, we have
\[
w_\xi=\gamma_\xi(\xi/k)
\begin{pmatrix}
-i & \epsilon q(\gamma(\xi/k))/k\\
r(\gamma(\xi/k))/k & i
\end{pmatrix}w,
\]
which reduces to
\[
w_\xi=
\begin{pmatrix}
-i & 0\\
0 & i
\end{pmatrix}w
\]
as $k\to\infty$.
This implies that
\begin{equation}
a(k),\bar{a}(k)\to 1,\quad
b(k),\bar{b}(k)\to 0\qquad
\mbox{as $|k|\to\infty$}.
\label{eqn:ab3}
\end{equation}
On the other hand, the ZS system \eqref{eqn:ZS2}
 has the Jost solutions satisfying \eqref{eqn:bc} for $k=0$,
 so that the scattering coefficients are still defined and analytic at $k=0$,
 as stated in Remark~4.2(i) of \cite{Y23a}.
So we apply the identity theorem (e.g., Theorem~3.2.6 of \cite{AF03})
 to obtain part~(iii), since $a(k)$ and $\bar{a}(k)$ are analytic
 in neighborhoods of $\Cset_+\cup\Rset$ and $\Cset_-\cup\Rset$, respectively.
\end{proof}

\begin{prop}
\label{prop:2b}
Let $k\in\Cset^\ast$.
We have the following$\,:$
\begin{itemize}
\setlength{\leftskip}{-1.6em}
\item[(i)]
If $r(x)=q(x)$, then $\bar{a}(k)=a(-k)$ and $\bar{b}(k)=b(-k);$
\item[(ii)]
If $r(x)=-q(x)$, then $\bar{a}(k)=a(-k)$ and $\bar{b}(k)=-b(-k);$
\item[(iii)]
If $r(x)=q(x)^\ast$, then $\bar{a}(k)=a(k^\ast)^\ast$ and $\bar{b}(k)=b(k^\ast)^\ast;$
\item[(iv)]
If $r(x)=-q(x)^\ast$, then $\bar{a}(k)=a(k^\ast)^\ast$ and $\bar{b}(k)=-b(k^\ast)^\ast$.
\end{itemize}
\end{prop}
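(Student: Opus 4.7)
The plan is to treat each of the four symmetry relations by finding a linear change of the dependent variable (sometimes combined with complex conjugation) which, together with the corresponding transformation of $k$, sends solutions of the ZS system back to solutions of the ZS system with the same potentials. Concretely, I look for a constant $2\times 2$ matrix $P$ such that $P M(k) P^{-1} = M(\pm k)$ in cases (i), (ii) (and $P\overline{M(k)}P^{-1}=M(k^{\ast})$ in cases (iii), (iv)), where $M(k)$ denotes the coefficient matrix of \eqref{eqn:ZS2}. The natural candidates are the Pauli-type matrices $\sigma_1=\bigl(\begin{smallmatrix}0&1\\1&0\end{smallmatrix}\bigr)$ for the symmetric cases (i), (iii) and $T=\bigl(\begin{smallmatrix}0&1\\-1&0\end{smallmatrix}\bigr)$ for the skew cases (ii), (iv). A direct computation shows that under (i) $\sigma_{1}M(k)\sigma_{1}=M(-k)$, under (ii) $TM(k)T^{-1}=M(-k)$, under (iii) $\sigma_{1}\overline{M(k)}\sigma_{1}=M(k^{\ast})$, and under (iv) $T\overline{M(k)}T^{-1}=M(k^{\ast})$.

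Next I identify the transformed Jost solutions by reading off their asymptotic behaviour as $x\to\pm\infty$ and comparing with \eqref{eqn:bc}. For example, in case (i), if $\phi(x;k)$ solves the ZS system at parameter $k$, then $\sigma_{1}\phi(x;k)$ solves it at $-k$; its asymptotic as $x\to-\infty$ is $\bigl(\begin{smallmatrix}0\\1\end{smallmatrix}\bigr)e^{i(-k)x}$, which uniquely identifies it as $\bar{\phi}(x;-k)$. The same matching procedure in each case gives the four relations
\[
\sigma_1\phi(x;k)=\bar\phi(x;-k),\ \sigma_1\psi(x;k)=\bar\psi(x;-k),\ \sigma_1\bar\phi(x;k)=\phi(x;-k),\ \sigma_1\bar\psi(x;k)=\psi(x;-k)
\]
in case (i), and analogous relations (with extra signs coming from $\det T=1$ but $T\bigl(\begin{smallmatrix}1\\0\end{smallmatrix}\bigr)=\bigl(\begin{smallmatrix}0\\-1\end{smallmatrix}\bigr)$) in case (ii); in cases (iii) and (iv) the same matching is done after first complex-conjugating the Jost solutions.

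Finally I insert these identifications into the Wronskian formulas \eqref{eqn:prop2a1}, using the bilinearity of $\det$, the identities $\det(Pv_1,Pv_2)=(\det P)\det(v_1,v_2)$, and $\det(v_1^{\ast},v_2^{\ast})=\det(v_1,v_2)^{\ast}$. For instance, in case (i),
\[
a(-k)=\det(\phi(x;-k),\psi(x;-k))=(\det\sigma_1)\det(\bar\phi(x;k),\bar\psi(x;k))=\bar a(k),
\]
and similarly $b(-k)=\bar b(k)$; the signs coming from $\det\sigma_1=-1$, $\det T=+1$, and from $T\bigl(\begin{smallmatrix}1\\0\end{smallmatrix}\bigr)=-\bigl(\begin{smallmatrix}0\\1\end{smallmatrix}\bigr)$ combine in each case to produce exactly the $\pm$ signs that appear in the statement.

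There is essentially no analytic obstacle here, since existence and uniqueness of the Jost solutions for $k\in\Cset^{\ast}$ is already granted by the discussion following condition (A$_1$); the only thing one has to be careful about is the bookkeeping of signs in cases (ii) and (iv), where a single sign flip in the boundary data propagates through the determinant identities and dictates whether $\bar b(k)$ equals $+b$ or $-b$ evaluated at the transformed argument. That sign-tracking is the only place where a mistake is easy to make, but it is the only step requiring genuine care.
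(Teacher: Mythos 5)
Your proof is correct and follows essentially the same route as the paper: in each case the paper also conjugates the system by the component swap (with a sign adjustment in cases (ii) and (iv), and composed with complex conjugation in (iii) and (iv)), identifies the transformed Jost solutions through their asymptotics \eqref{eqn:bc}, and reads off the scattering coefficients. The only cosmetic difference is that the paper compares coefficients in the linear relation \eqref{eqn:ab} rather than substituting into the Wronskian formulas \eqref{eqn:prop2a1}; both yield the same sign bookkeeping.
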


\begin{proof}
Assume that $r(x)=q(x)$.
We rewrite \eqref{eqn:ZS2} as
\[
\begin{pmatrix}
w_{2x}\\
w_{1x}
\end{pmatrix}=
\begin{pmatrix}
ik & q(x)\\
q(x) & -ik
\end{pmatrix}\begin{pmatrix}
w_2\\
w_1
\end{pmatrix},
\]
which becomes the same as \eqref{eqn:ZS2} when replacing $k$ with $-k$,
 where $w_j$ represents the $j$th component of $w$ for $j=1,2$.
Since by \eqref{eqn:bc}
\begin{equation}
\begin{split}
&
\bar{\phi}_1(x;-k)\sim 0,\quad
\bar{\phi}_2(x;-k)\sim e^{-ikx}\quad
\mbox{as $x\to-\infty$},\\
&
\bar{\psi}_1(x;-k)\sim e^{ikx},\quad
\bar{\psi}_2(x;-k)\sim 0\quad
\mbox{as $x\to+\infty$},
\end{split}
\label{eqn:prop2b1}
\end{equation}
we have
\begin{align*}
&
\bar{\phi}_1(x;-k)=\phi_2(x;k),\quad
\bar{\phi}_2(x;-k)=\phi_1(x;k),\\
&
\bar{\psi}_1(x;-k)=\psi_2(x;k),\quad
\bar{\psi}_2(x;-k)=\psi_1(x;k),
\end{align*}
where $\phi_j$, $\bar{\phi}_j$, $\psi_j$ and $\bar{\psi}_j$
 represent the $j$th components of $\phi$, $\bar{\phi}$, $\psi$ and $\bar{\psi}$,
 respectively, for $j=1,2$.
Hence, by \eqref{eqn:ab}
\begin{align*}
\begin{pmatrix}
\phi_1(x;-k)\\
\phi_2(x;-k)
\end{pmatrix}
=&\begin{pmatrix}
\bar{\phi}_2(x;k)\\
\bar{\phi}_1(x;k)
\end{pmatrix}
=\bar{a}(k)
\begin{pmatrix}
\psi_2(x;k)\\
\psi_1(x;k)
\end{pmatrix}
+\bar{b}(k)
\begin{pmatrix}
\bar{\psi}_2(x;k)\\
\bar{\psi}_1(x;k)
\end{pmatrix}\\
=&
\bar{a}(k)
\begin{pmatrix}
\bar{\psi}_1(x;-k)\\
\bar{\psi}_2(x;-k)
\end{pmatrix}
+\bar{b}(k)
\begin{pmatrix}
\psi_1(x;-k)\\
\psi_2(x;-k)
\end{pmatrix},
\end{align*}
which yields part~(i).

Assume that $r(x)=-q(x)$.
We rewrite \eqref{eqn:ZS2} as
\[
\begin{pmatrix}
w_{2x}\\
w_{1x}
\end{pmatrix}=
\begin{pmatrix}
ik & -q(x)\\
q(x) & -ik
\end{pmatrix}\begin{pmatrix}
w_2\\
w_1
\end{pmatrix},
\]
which becomes the same as \eqref{eqn:ZS2}
 when replacing $k$ and either $w_1$ or $w_2$ with $-k$ and $-w_1$ or $-w_2$.
From \eqref{eqn:prop2b1} we have
\begin{align*}
&
-\bar{\phi}_1(x;-k)=\phi_2(x;k),\quad
\bar{\phi}_2(x;-k)=\phi_1(x;k),\\
&
\bar{\psi}_1(x;-k)=\psi_2(x;k),\quad
-\bar{\psi}_2(x;-k)=\psi_1(x;k),
\end{align*}
which yields part~(ii) by \eqref{eqn:ab}, as in part~(i).

Assume that $r(x)=q(x)^\ast$.
We rewrite \eqref{eqn:ZS2} as
\[
\begin{pmatrix}
w_{2x}\\
w_{1x}
\end{pmatrix}=
\begin{pmatrix}
ik & q(x)^\ast\\
q(x) & -ik
\end{pmatrix}\begin{pmatrix}
w_2\\
w_1
\end{pmatrix},
\]
which becomes the same as \eqref{eqn:ZS2}
 when taking complex conjugate for both sides and replacing $k^\ast$ with $k$.
Since by \eqref{eqn:bc}
\begin{equation}
\begin{split}
&
\bar{\phi}_1(x;k)^\ast\sim 0,\quad
\bar{\phi}_2(x;k)^\ast\sim e^{-ik^\ast x}\quad
\mbox{as $x\to-\infty$},\\
&
\bar{\psi}_1(x;k)^\ast\sim e^{ik^\ast x},\quad
\bar{\psi}_2(x;k)^\ast\sim 0\quad
\mbox{as $x\to+\infty$},
\end{split}
\label{eqn:prop2b2}
\end{equation}
we have
\begin{align*}
&
\bar{\phi}_1(x;k)^\ast=\phi_2(x;k^\ast),\quad
\bar{\phi}_2(x;k)^\ast=\phi_1(x;k^\ast),\\
&
\bar{\psi}_1(x;k)^\ast=\psi_2(x;k^\ast),\quad
\bar{\psi}_2(x;k)^\ast=\psi_1(x;k^\ast),
\end{align*}
which yields part~(iii) by \eqref{eqn:ab}, as in part~(i).

Assume that $r(x)=-q(x)^\ast$.
We rewrite \eqref{eqn:ZS2} as
\[
\begin{pmatrix}
w_{2x}\\
w_{1x}
\end{pmatrix}=
\begin{pmatrix}
ik & -q(x)^\ast\\
q(x) & -ik
\end{pmatrix}\begin{pmatrix}
w_2\\
w_1
\end{pmatrix},
\]
which becomes the same as \eqref{eqn:ZS2}
 when taking complex conjugate for both sides
 and replacing $k$ and either $w_1$ or $w_2$ with $k^\ast$ and $-w_1$ or $-w_2$. 
From \eqref{eqn:prop2b2} we have
\begin{align*}
&
-\bar{\phi}_1(x;k)^\ast=\phi_2(x;k^\ast),\quad
\bar{\phi}_2(x;k)^\ast=\phi_1(x;k^\ast),\\
&
\bar{\psi}_1(x;k)^\ast=\psi_2(x;k^\ast),\quad
-\bar{\psi}_2(x;k)^\ast=\psi_1(x;k^\ast),
\end{align*}
which yields part~(iv) by \eqref{eqn:ab}, as in part~(i).
Thus, we complete the proof.
\end{proof}

Let $G$ be the differential Galois group of \eqref{eqn:ZS2}.
Then $G$ is an algebraic group such that $G\subset\SL(2,\Cset)$ by \eqref{eqn:W}
 (see also Section~3 of \cite{Y23a}).
We have the following classification
 for such algebraic groups (see Section~2.1 of \cite{M99} for a proof).
Recall that an algebraic group $\G$ contains in general a unique maximal algebraic subgroup $\G^0$,
 which is called the \emph{connected component of the identity}
 or \emph{connected identity component}.

\begin{prop}
\label{prop:2c}
Any algebraic group $\G\subset\SL(2,\Cset)$ is similar to one of the following types:
\begin{enumerate}
\setlength{\leftskip}{-1.2em}
\item[(i)] $\G$ is finite and $\G^0= \{\id\}$,
 where $\id$ is the $2\times 2$  identity matrix$;$
\item[(ii)] $\G = \left\{
\begin{pmatrix}
\lambda & 0\\
\mu & \lambda^{-1} 
\end{pmatrix}
\middle|\,\lambda\text{ is a root of $1$, $\mu\in\Cset$}
\right\}$
and $\G^0 = \left\{\begin{pmatrix}
1&0\\
\mu & 1 
\end{pmatrix}
\middle|\,\mu \in \Cset\right\}$;
\item[(iii)] 
$\G = \G^0 = 
\left\{
\begin{pmatrix}
\lambda &0 \\
0 & \lambda ^{-1}
\end{pmatrix}
\middle|\,\lambda \in \Cset^{*}
\right\}$;
\item[(iv)]
$\G = \left\{
\begin{pmatrix}
\lambda & 0\\
0 & \lambda^{-1} 
\end{pmatrix},
\begin{pmatrix}
0 & -\beta^{-1}\\
\beta & 0 
\end{pmatrix}
\middle|\,\lambda, \beta \in \Cset^{*}
\right\}$
and $\G^0 = \left\{\begin{pmatrix}
\lambda &0\\
0 & \lambda^{-1}
\end{pmatrix}
\middle|\, \lambda \in \Cset^*\right\}$;
\item[(v)] $\G = \G^0 = \left\{
\begin{pmatrix}
\lambda & 0\\
\mu & \lambda^{-1}
\end{pmatrix}
\middle|\,\lambda \in \Cset^{*},\, \mu \in \Cset
\right\}$;
\item[(vi)] $\G = \G^0 = \SL (2,\, \Cset)$.
\end{enumerate}
\end{prop}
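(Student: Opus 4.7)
The plan is to reduce to a classification of Lie subalgebras of $\mathfrak{sl}_2(\Cset)$ together with some normalizer computations in $\SL(2,\Cset)$. Since $\G$ is an algebraic group, its identity component $\G^0$ is a connected closed normal subgroup of finite index, so $\G$ sits inside the normalizer $N_{\SL(2,\Cset)}(\G^0)$. It therefore suffices to enumerate (a) the connected algebraic subgroups of $\SL(2,\Cset)$, and (b) for each such $\G^0$, the finite extensions realizable inside $N_{\SL(2,\Cset)}(\G^0)/\G^0$.

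For (a), I would invoke the characteristic-zero correspondence between connected algebraic subgroups of $\SL(2,\Cset)$ and algebraic Lie subalgebras of $\mathfrak{sl}_2(\Cset)$. Since the centralizer of a nonzero semisimple element of $\mathfrak{sl}_2(\Cset)$ is a one-dimensional Cartan and contains no nonzero nilpotent, every nonzero element of $\mathfrak{sl}_2(\Cset)$ is either semisimple or nilpotent; consequently each one-dimensional subalgebra is $\SL(2,\Cset)$-conjugate to either the diagonal Cartan $\mathfrak{t}$ or a nilpotent line $\mathfrak{n}$. Any two-dimensional subalgebra must be non-abelian (an abelian one would embed into a one-dimensional centralizer) and hence is conjugate to the Borel $\mathfrak{b}=\mathfrak{t}\oplus\mathfrak{n}$. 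Together with $\{0\}$ and $\mathfrak{sl}_2(\Cset)$ itself, this yields exactly five conjugacy classes of connected algebraic subgroups: $\{\id\}$, the unipotent $U$, the maximal torus $T$, the Borel $B$, and $\SL(2,\Cset)$ --- precisely the identity components appearing in cases (i)--(vi).

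For (b), a normalizer computation finishes each case. The extremes $\G^0=\{\id\}$ and $\G^0=\SL(2,\Cset)$ give (i) and (vi) at once. For $\G^0=T$ the Weyl group $N_{\SL(2,\Cset)}(T)/T\cong\Zset/2\Zset$ is represented by $\left(\begin{smallmatrix}0&-\beta^{-1}\\ \beta&0\end{smallmatrix}\right)$ and swaps the two weights, so $\G$ is either $T$ itself or the full normalizer, giving cases (iii) and (iv). For $\G^0=U$ the normalizer is the Borel $B$, so $\G$ is determined by its image in $B/U\cong\Cset^\ast$; since $\G/\G^0$ is finite, this image must be a finite subgroup of $\Cset^\ast$, i.e.\ a group of roots of unity, yielding case (ii). Finally $B$ is self-normalizing in $\SL(2,\Cset)$, so $\G^0=B$ forces $\G=B$, which is case (v).

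The main obstacle is conceptual rather than computational: one must carefully justify the algebraic-group/Lie-algebra dictionary, namely that each connected closed subgroup of $\SL(2,\Cset)$ is determined by its Lie algebra and, conversely, that each candidate subalgebra above is actually the Lie algebra of an algebraic subgroup (as opposed to merely an analytic one). In a self-contained account I would cite Chevalley's correspondence for this foundational point; once it is in hand, the remainder of the proof is elementary linear algebra in $\mathfrak{sl}_2(\Cset)$ together with routine matrix calculations of normalizers inside $\SL(2,\Cset)$.
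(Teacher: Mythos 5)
The paper does not prove this proposition at all: it is quoted as a known classification, with an explicit pointer to Section~2.1 of [M99] for the proof, so there is no in-paper argument to compare against. Your proposal is a correct and essentially standard reconstruction of that proof: classify $\G^0$ by classifying algebraic Lie subalgebras of $\mathfrak{sl}_2(\Cset)$ (trivial, nilpotent line, Cartan, Borel, all of $\mathfrak{sl}_2$), then pin down $\G$ inside $N_{\SL(2,\Cset)}(\G^0)$ using that $\G/\G^0$ is finite. The normalizer computations you invoke ($N(U)=B$ with $B/U\cong\Cset^\ast$, the Weyl group $N(T)/T\cong\Zset/2\Zset$, $B$ self-normalizing) are all correct and do yield exactly cases (i)--(vi). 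Two small remarks. First, your argument actually produces the \emph{correct} form of case (ii) --- the image of $\G$ in $B/U\cong\Cset^\ast$ is a finite subgroup, i.e.\ the group of $n$-th roots of unity for some fixed $n$ --- which is more precise than the proposition's loose phrasing ``$\lambda$ is a root of $1$'' (the set of \emph{all} roots of unity is not Zariski closed, so that literal reading would not be an algebraic group). Second, a minor logical slip: that every nonzero element of $\mathfrak{sl}_2(\Cset)$ is semisimple or nilpotent follows directly from trace zero (eigenvalues $\pm a$, distinct unless $a=0$), not from the centralizer remark; the one-dimensionality of centralizers is what you actually need to exclude two-dimensional abelian subalgebras. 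With the Chevalley correspondence cited as you propose, the argument is complete.
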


The system~\eqref{eqn:ZS2} is solvable by quadrature
 unless $G$ is of type (vi).
Proposition~\ref{prop:2c} plays a key role
 in the proof of Theorem~\ref{thm:main2} in Section~4.


\section{Proof of Theorem~\ref{thm:main1}}

In this section we give a proof of Theorem~\ref{thm:main1}.
We extend arguments used in the proof of Theorem~1.3 of \cite{Y23b}
 for the linear Schr\"odinger equations.
Henceforth we assume that the hypotheses of Theorem~\ref{thm:main1} hold
 and $b(k),\bar{b}(k)=0$ for $k\in\Rset^\ast$.

\begin{proof}[Proof of Theorem~$\ref{thm:main1}$]
We see by Proposition~\ref{prop:2a}(iii) and \eqref{eqn:2b}
 that $a(k),\bar{a}(k)\neq 0$ for $k\in\Rset^\ast$
 and $a(k)$ and $\bar{a}(k)$ have finitely many zeros
 in $\Cset_+$ and $\Cset_-$, respectively.
Assume that $a(k)$ and $\bar{a}(k)$ have $n$ and $\bar{n}$ zeros
 in $\Cset_+$ and $\Cset_-$, respectively, and
 let $k_j$ and $\bar{k}_j$ be their zeros of multiplicity $\nu_j$and $\bar{\nu}_j$  in $\Cset_+$
 for $j=1,\ldots,n$ and $j=1,\ldots,\bar{n}$, respectively.
Let the curve $\Gamma=\{\gamma(\xi)\in\Cset_+\mid \xi\in\Rset\}$
 contain no poles of $q(x),r(x)$,
 and let $U$ be a neighborhood of $\Gamma$ in $\Cset_+$ containing no poles of $q(x),r(x)$,
 as in the proof of Proposition~\ref{prop:2a}.

Let
\begin{equation}
\begin{split}
&
M(x;k)=\phi(x;k)e^{ikx},\quad
\bar{M}(x;k)=\bar{\phi}(x;k)e^{-ikx},\\
&
N(x;k)=\psi(x;k)e^{-ikx},\quad
\bar{N}(x;k)=\bar{\psi}(x;k)e^{i kx},
\end{split}
\label{eqn:MN2}
\end{equation}
By \eqref{eqn:bc} we have
\begin{equation}
\begin{split}
&
M(x;k)\to
\begin{pmatrix}
1\\
0
\end{pmatrix},\quad
\bar{M}(x;k)\to
\begin{pmatrix}
0\\
1
\end{pmatrix}
\quad\mbox{as $x\to-\infty$,}\\
&
N(x;k)\to
\begin{pmatrix}
0\\
1
\end{pmatrix},\quad
\bar{N}(x;k)\to
\begin{pmatrix}
1\\
0
\end{pmatrix}
\quad\mbox{as $x\to+\infty$.}
\end{split}
\label{eqn:MN2a}
\end{equation}
Since $\phi(x;k),\psi(x;k)$ (resp. $\bar{\phi}(x;k),\bar{\psi}(x;k)$) are analytic in $x\in U$
 and $k\in\Cset_+$ (resp. $k\in\Cset_-$),
 so are $M(x;k),N(x;k)$ (resp. $\bar{M}(x;k),\bar{N}(x;k)$).
It follows from \eqref{eqn:ab} that
\begin{equation}
\begin{split}
&
M(x;k)=a(k)\bar{N}(x;k)+b(k)N(x;k)e^{2ikx},\\
&
\bar{M}(x;k)=\bar{a}(k)N(x;k)+\bar{b}(k)\bar{N}(x;k)e^{-2ikx}.
\end{split}
\label{eqn:MN2b}
\end{equation}
Let
\begin{align*}
N_j^r(x):=\frac{\partial^r N}{\partial k^r}(x;k_j),\quad
 r=0,\ldots,\nu_j-1,\ j=1,\ldots,n,\\
\bar{N}_j^r(x):=\frac{\partial^r\bar{N}}{\partial k^r}(x;\bar{k}_j),\quad
 r=0,\ldots,\bar{\nu}_j-1,\ j=1,\ldots,\bar{n}.
\end{align*}
Differentiating \eqref{eqn:MN2b} with respect to $k$ and substituting $k=k_j$ or $\bar{k}_j$,
 we obtain
\begin{equation}
\begin{split}
&
\frac{\partial^r M}{\partial k^r}(x;k_j)
=\frac{\partial^r}{\partial k^r}(b(k)N(x;k)e^{2ikx})\Big|_{k=k_j},\quad
 r=0,\ldots,\nu_j-1,\\
&
\frac{\partial^r\bar{M}}{\partial k^r}(x;\bar{k}_j)
 =\frac{\partial^r}{\partial k^r}(\bar{b}(k)\bar{N}(x;k)e^{-2ikx})\Big|_{k=\bar{k}_j},\quad
 r=0,\ldots,\bar{\nu}_j-1,
\end{split}
\label{eqn:MN2c}
\end{equation}
for $j=1,\ldots,n$
 since the zeros $k_j$ and $\bar{k}_j$ of $a(k)$ and $\bar{a}(k)$
 are of multiplicity $\nu_j$ and $\bar{\nu}_j$, respectively. 
In particular, the right hand sides of the first and second equations in \eqref{eqn:MN2c}
 can be, respectively, represented by linear combinations of $N_j^r(x)$, $r=0,\ldots,\nu_j-1$,
 and $\bar{N}_j^r(x)$, $r=0,\ldots,\bar{\nu}_j-1$,
 where the coefficients are given by polynomials of $x$ and exponential functions
 $\{e^{2ik_jx}\}_{j=1}^n$ and $\{e^{-2i\bar{k}_jx}\}_{j=1}^{\bar{n}}$.
Actually, when $\nu_j,\bar{\nu}_j>1$, we have
\begin{align*}
&
M(x;k_j)
=b(k_j)e^{2ik_jx}N_j^0(x),\quad
\bar{M}(x;k_j)
=\bar{b}(\bar{k}_j)e^{-2i\bar{k}_jx}\bar{N}_j^0(x),\\
&
\frac{\partial M}{\partial k}(x;\bar{k}_j)
=b(k_j)e^{2ik_jx}N_j^1(x)+(b_k(k_j)+2ixb(k_j))e^{2ik_jx}N_j^0(x),\\
&
\frac{\partial\bar{M}}{\partial k}(x;\bar{k}_j)
=\bar{b}(\bar{k}_j)e^{-2i\bar{k}_jx}\bar{N}_j^1(x)
 +(\bar{b}_k(\bar{k}_j)-2ix\bar{b}(\bar{k}_j))e^{-2i\bar{k}_jx}\bar{N}_j^0(x),\\
 & \qquad\ldots.
\end{align*}

Define the projection operators
\[
\P^\pm(f)=\frac{1}{2\pi i}\int_{-\infty}^\infty\frac{f(\kappa)}{\kappa-(k\pm i 0)}\d\kappa
\]
for $f\in L^1(\Rset)$, where $k\in\Cset_+$ or $\Cset_-$
 depending on whether the upper or lower signs are taken simultaneously.
If $f_+$ (resp. $f_-$) is analytic in $\Cset_+$ (resp. in $\Cset_-$)
 and $f_\pm(k)\to 0$ as $|k|\to\infty$, then
\[
\P^\pm(f_\pm)=\pm f_\pm,\quad
\P^\pm(f_\mp)=0
\]
(see, e.g., Chapter~7 of \cite{AF03}).
Dividing both sides of the first and second equations of \eqref{eqn:MN2b}
 by $a(k)$ and $\bar{a}(k)$,
 we apply the projection operators $\P^-$ and $\P^+$ to the resulting equations
 and obtain
\begin{equation}
\begin{split}
&
\bar{N}(x;k)=
\begin{pmatrix}
1\\
0
\end{pmatrix}
+\frac{1}{2\pi i}\sum_{j=1}^{n}\int_{|\kappa-k_j|=\delta}
 \frac{M(x;\kappa)}{(k-\kappa)a(\kappa)}\d\kappa,\\
&
N(x;k)=
\begin{pmatrix}
0\\
1
\end{pmatrix}
+\frac{1}{2\pi i}\sum_{j=1}^{\bar{n}}\int_{|\kappa-\bar{k}_j|=\delta}
 \frac{\bar{M}(x;\kappa)}{(k-\kappa)\bar{a}(\kappa)}\d\kappa,
\end{split}
\label{eqn:MN2d}
\end{equation}
where $\delta>0$ is sufficiently small
 and the asymptotic relations,
\[
M(x;k),\bar{N}(x;k)\to
\begin{pmatrix}
1\\
0
\end{pmatrix},\quad
\bar{M}(x;k),N(x;k)\to
\begin{pmatrix}
0\\
1
\end{pmatrix}
\]
as $k\to\infty$ (see, e.g.,  Section 2.2.2 of \cite{APT04}), have been used,
 since $M(x;k),N(x;k)$ and $\bar{M}(x;k),\bar{N}(x;k)$ are analytic
 in $k\in\Cset_+$ and $\Cset_-$, respectively,
 and $b(k),\bar{b}(k)=0$ on $\Rset^\ast$.
Using the method of residues, we see that
 the integrals in the first and second equations of \eqref{eqn:MN2d}
 are, respectively, represented by linear combinations
 of $N_j^r(x)$ and $\bar{N}_j^r(x)$, $r=0,\ldots,\nu_j-1$ or $\bar{\nu}_j-1$,
 $j=1,\ldots,n$ or $\bar{n}$,
 where the coefficients are given by polynomials
 of $x$, $\{e^{2ik_jx}\}_{j=1}^n$ and $\{e^{-2i\bar{k}_jx}\}_{j=1}^{\bar{n}}$,
 like \eqref{eqn:MN2c}.
For example, we have
\begin{align*}
&
\frac{1}{2\pi i}\int_{|\kappa-k_j|=\delta}
 \frac{M(x;\kappa)}{(k-\kappa)a(\kappa)}\d\kappa
 =\frac{e^{2ik_j x}}{k-k_j}\frac{b(k_j)}{a_k(k_j)}N_j^0(x),\\
&
\frac{1}{2\pi i}\int_{|\kappa-\bar{k}_j|=\delta}
 \frac{\bar{M}(x;\kappa)}{(k-\kappa)\bar{a}(\kappa)}\d\kappa
 =\frac{e^{-2i\bar{k}_j x}}{k-\bar{k}_j}\frac{\bar{b}(\bar{k}_j)}{\bar{a}_k(\bar{k}_j)}
  \bar{N}_j^0(x)
\end{align*}
when $\nu_j,\bar{\nu}_j=1$, and
\begin{align*}
&
\frac{1}{2\pi i}\int_{|\kappa-k_j|=\delta}
 \frac{M(x;\kappa)}{(k-\kappa)a(\kappa)}\d\kappa\\
&
=\frac{2e^{2ik_j x}}{(k-k_j)a_{kk}(k_j)}\biggl(b(k_j)N_j^1(x)\\
&\quad
+\biggl(b_k(k_j)+2ixb(k_j)
 +\frac{b(k_j)}{k-k_j}-\frac{a_{kkk}(k_j)b(k_j)}{3a_{kk}(k_j)}\biggr)
 N_j^0(x) \biggr),\\
&
\frac{1}{2\pi i}\int_{|\kappa-\bar{k}_j|=\delta}
 \frac{\bar{M}(x;\kappa)}{(k-\kappa)\bar{a}(\kappa)}\d\kappa\\
&
=\frac{2e^{-2i\bar{k}_j x}}{(k-\bar{k}_j)\bar{a}_{kk}(\bar{k}_j)}
 \biggl(\bar{b}(\bar{k}_j)\bar{N}_j^1(x)\\
 &\quad
 +\biggl(\bar{b}_k(\bar{k}_j)-2ix\bar{b}(\bar{k}_j)
 +\frac{\bar{b}(\bar{k}_j)}{k-\bar{k}_j}
 -\frac{\bar{a}_{kkk}(\bar{k}_j)\bar{b}(k_j)}{3\bar{a}_{kk}(\bar{\bar{k}}_j)}\biggr)\bar{N}_j^0(x)
 \biggr)
\end{align*}
when $\nu_j,\bar{\nu}_j=2$.

Differentiating \eqref{eqn:MN2d} with respect to $k$
 up to $\bar{\nu}_j-1$ or  $\nu_j-1$ times
 and setting $k=\bar{k}_j$ or $k_j$,
 we obtain a system of linear equations about $N_j^r(x)$ and $\bar{N}_j^r(x)$.
 $r=0,\ldots,\nu_j-1$ or $\bar{\nu}_j-1$, $j=1,\ldots,n$ or $\bar{n}$.
Note that
\[
\frac{\partial}{\partial k}(N(x;k)e^{-2ik x})
=\left(\frac{\partial N}{\partial k}(x;k)-2ixN(x;k)\right)e^{-2ik x}
\]
and so on.
We solve the system of linear equations to express them as rational functions
 of $x$, $\{e^{2ik_jx}\}_{j=1}^n$ and $\{e^{2i\bar{k}_jx}\}_{j=1}^{\bar{n}}$,
 using basic arithmetic operations.
Hence, we see by \eqref{eqn:MN2} and \eqref{eqn:MN2d} that
 the Jost solutions $\psi(x;k),\bar{\psi}(x;k)$
 are also represented by rational functions
 of $x$, $\{e^{ik_jx}\}_{j=1}^n$ and $\{e^{i\bar{k}_jx}\}_{j=1}^{\bar{n}}$.
This means the desired result.
 \end{proof}
 
\begin{rmk}
We show that
\begin{equation}
q(x)=2i\lim_{|k|\to\infty}kN_1(x;k),\quad
r(x)=-2i\lim_{|k|\to\infty}k\bar{N}_2(x;k),
\label{eqn:qr2}
\end{equation}
where $N_\ell(x;k)$ and $\bar{N}_\ell(x;k)$
 are the $\ell$th components of $N(x;k)$ and $\bar{N}(x;k)$, respectively, for $\ell=1,2$
$($see, e.g., Section~$2.2.2$ of {\rm\cite{APT04})},
Hence, $q(x)$ and $r(x)$ are represented by rational functions
 of $x$, $\{e^{2ik_jx}\}_{j=1}^n$ and $\{e^{2i\bar{k}_jx}\}_{j=1}^{\bar{n}}$.
Moreover, it follows from \eqref{eqn:ab3}, \eqref{eqn:MN2a}, \eqref{eqn:MN2d} and \eqref{eqn:qr2}
 that
\[
q(x),r(x)\to 0\quad\mbox{as $x\to\pm\infty$}.
\]
See also Section~$4.2.2$ of {\rm\cite{Y23a}}.
\end{rmk}

\begin{ex}
\label{ex:3a}
Let $n,\bar{n}=1$ and $\nu_1,\bar{\nu}_1=2$ in the proof of Theorem~$\ref{thm:main1}$.
Assume that $k_1=i$, $\bar{k}_1=-i$,
 $a_{kkk}(k_1),\bar{a}_{kkk}(\bar{k}_1),b_k(k_1),\bar{b}_k(\bar{k}_1)=0$ and
\begin{equation}
a_{kk}(k_1),\bar{a}_{kk}(\bar{k}_1)=-\tfrac{1}{2},\quad
b(k_1),\bar{b}(\bar{k}_1)=1.
\label{eqn:ex3a1}
\end{equation}
Then the system of linear equations about $N_1^j(x),\bar{N}_1^j(x)$, $j=0,1$,
 discussed at the end of the proof of Theorem~$\ref{thm:main1}$ is given by
\begin{align*}
\bar{N}_1^0(x)
=&\begin{pmatrix}
 1\\
 0
\end{pmatrix}
+e^{-2x}\left(iN_1^1(x)-\left(2x+\tfrac{1}{2}\right)N_1^0(x)\right),\\
N_1^0(x)
=&\begin{pmatrix}
 0\\
 1
\end{pmatrix}
-e^{-2x}\left(i\bar{N}_1^1(x)+\left(2x+\tfrac{1}{2}\right)\bar{N}_1^0(x)\right),\\
\bar{N}_1^1(x)
=& \frac{e^{-2x}}{2}\left(N_1^1(x)+i(2x+1)N_1^0(x)\right),\\
N_1^1(x)
=&
\frac{e^{-2x}}{2}\left(\bar{N}_1^1(x)-i(2x+1)\bar{N}_1^0(x)\right).
\end{align*}
Solving the above equation, we obtain
\begin{align*}
&
N_1^0(x)=\frac{e^{2x}}{d(x)}
\begin{pmatrix}
(4x+1)e^{4x}+1\\
e^{2x}(e^{4x}-4x-3)
\end{pmatrix},\quad
\bar{N}_1^0(x)=\frac{2e^{2x}}{d(x)}
\begin{pmatrix}
e^{2x}(e^{4x}-4x-3)\\
((4x+1)e^{4x}+1)
\end{pmatrix},\\
&
N_1^1(x)=\frac{ie^{2x}}{d(x)}
\begin{pmatrix}
(2x+1)(e^{4x}-1)\\
2e^{2x}(2x+1)^2
\end{pmatrix},\quad
\bar{N}_1^1(x)=-\frac{ie^{2x}}{d(x)}
\begin{pmatrix}
2e^{2x}(2x+1)^2\\
(2x+1)(e^{4x}-1)
\end{pmatrix},
\end{align*}
where
\[
d(x)=e^{8x}-2(8x^2+8x+3)e^{4x}+1.
\]
Using \eqref{eqn:MN2d}, we have
\begin{align*}
&
\bar{N}(x;k)\\
&
=\begin{pmatrix}
1\\
0
\end{pmatrix}+
\frac{4i}{(k-i)^2d(x)}
\begin{pmatrix}
(8(k-i)x^2+4(k-2i)x+k-2i)e^{4x}+k\\
-e^{2x}((2(k-i)x-i)e^{4x}+2(k+i)x+2k+i)
\end{pmatrix},\\
&
N(x;k)\\
&
=\begin{pmatrix}
0\\
1
\end{pmatrix}+
\frac{4i}{(k+i)^2d(x)}
\begin{pmatrix}
e^{2x}((2(k+i)x+i)e^{4x}+2(k-i)x+2k-i)\\
-(8(k+i)x^2+4(k+2i)x+k+2i)e^{4x}-k
\end{pmatrix},
\end{align*}
from which the Jost solutions $\psi(x;k),\bar{\psi}(x;k)$
 are obtained by \eqref{eqn:MN2} as
\[
\psi(x;k)=N(x;k)e^{ikx},\quad
\bar{\psi}(x;k)=\bar{N}(x;k)e^{-ikx}.
\]
It follows from \eqref{eqn:qr2} that
\begin{equation}
q(x)=r(x)=\frac{32e^{2x}(4xe^{4x}+x+1)}{d(x)}.
\label{eqn:ex3a2}
\end{equation}
See Fig.~$\ref{fig:3a}$ for the shape of $q(x),r(x)$.
It has simple poles at $x=-0.245036\ldots$ and $0.864558\ldots$.
The potential \eqref{eqn:ex3a2} is of a negaton-type
 {\rm\cite{RSK96,S92,WF07,WZF08}}.

\begin{figure}
\includegraphics[scale=0.7]{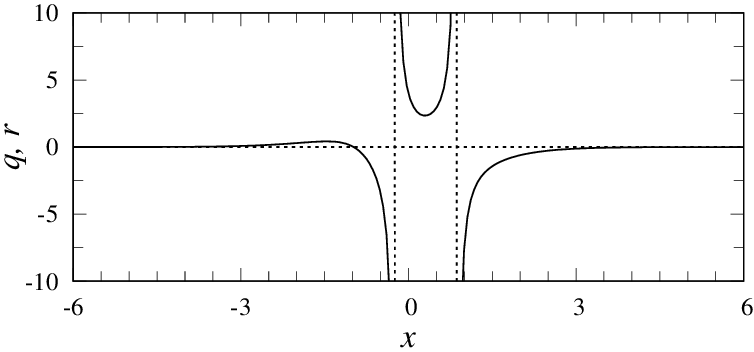}
\caption{Potentials $q(x),r(x)$ in Example~\ref{ex:3a}:
They have a simple pole at $x=-0.245036\ldots$ and  and $0.864558\ldots$,
 the loci of which are represented by vertical dotted lines.
\label{fig:3a}}
\end{figure}

Let $k\in\Rset^\ast$.
We see that
\[
\psi(x;k)\sim\frac{(k-i)^2}{(k+i)^2}
\begin{pmatrix}
0\\
1
\end{pmatrix}e^{ikx},\quad
\bar{\psi}(x;k)\sim\frac{(k+i)^2}{(k-i)^2}
\begin{pmatrix}
1\\
0
\end{pmatrix}e^{-ikx}
\]
as $x\to -\infty$.
Hence, by \eqref{eqn:bc} we have
\[
\phi(x;k)=\frac{(k-i)^2}{(k+i)^2}\bar{\psi}(x;k),\quad
\bar{\phi}(x;k)=\frac{(k+i)^2}{(k-i)^2}\psi(x;k),
\]
so that
\[
a(k)=\frac{(k-i)^2}{(k+i)^2},\quad
\bar{a}(k)=\frac{(k+i)^2}{(k-i)^2}.
\]
On the other hand, we see that
\[
\psi(x;i)e^{-x}\sim
\begin{pmatrix}
1\\
0
\end{pmatrix}\quad\mbox{and}\quad
\bar{\psi}(x;-i)e^{-x}\sim
\begin{pmatrix}
0\\
1
\end{pmatrix}
\]
as $x\to -\infty$, and consequently
\[
\phi(x;i)=\psi(x;i),\quad
b(i)=1
\]
and
\[
\bar{\phi}(x;-i)=\bar{\psi}(x;-i),\quad
\bar{b}(-i)=1.
\]
Moreover,
\[
a_{k}(i),\bar{a}_{k}(-i)=0,\quad
a_{kk}(i),\bar{a}_{kk}(-i)=-\tfrac{1}{2}.
\]
These agree with our assumption \eqref{eqn:ex3a1}.
\end{ex}


\section{Proof of Theorem~\ref{thm:main2}}
In this section we give a proof of Theorem~\ref{thm:main2}.
We extend arguments used in the proof of Theorem~1.4 of \cite{Y23b}
 for the linear Schr\"odinger equations, as in Section~3.
Henceforth we assume that $k\in\Rset^\ast$
 and the hypotheses of Theorem~\ref{thm:main2} hold.

Letting $y=1/x$, we rewrite \eqref{eqn:ZS2} as
\begin{equation}
w_y=-\frac{1}{y^2}
\begin{pmatrix}
-ik & \bar{q}(y)\\
\bar{r}(y) & ik
\end{pmatrix}w,
\label{eqn:v}
\end{equation}
where $\tilde{q}(y)=q(1/y)$ and $\tilde{r}(y)=r(1/y)$.
As shown below (see Lemma~\ref{prop:4a}),
 Eq.~\eqref{eqn:v} has an irregular singularity at $y=0$,
 i.e., Eq.~\eqref{eqn:ZS2} has an irregular singularity at infinity $x=\infty$.
To obtain a necessary condition for the integrabilty of \eqref{eqn:ZS2},
 we compute its formal monodromy, exponential torus and Stokes matrices
 \cite{MS16,PS03,S09} around $y=0$ for \eqref{eqn:v}.
See these references for necessary information on the following computations.
A concise review of the theory was also given in Appendix~A of \cite{Y23b}.

Letting
\[
w=\begin{pmatrix}
e^{-ik/y} & 0\\
0 & e^{ik/y}
\end{pmatrix}\eta,
\]
we rewrite \eqref{eqn:v} as
\begin{equation*}
\eta_y=-\frac{1}{y^2}
\begin{pmatrix}
0 & e^{2ik/y}\bar{q}(y)\\
e^{-2ik/y}\bar{r}(y) & 0
\end{pmatrix}\eta,
\end{equation*}
or equivalently,
\begin{equation}
\begin{split}
&
\eta_{1yy}+\left(\frac{2}{y}+\frac{2ik}{y^2}-\frac{\bar{q}_y(y)}{\bar{q}(y)}\right)\eta_{1y}
-\frac{\bar{q}(y)\bar{r}(y)}{y^4}\eta_1=0,\\
&
\eta_{2yy}+\left(\frac{2}{y}-\frac{2ik}{y^2}-\frac{\bar{r}_y(y)}{\bar{r}(y)}\right)\eta_{2y}
-\frac{\bar{q}(y)\bar{r}(y)}{y^4}\eta_2=0,
\end{split}
\label{eqn:eta}
\end{equation}
where $\eta_j$ represents the $j$th component of $\eta$ for $j=1,2$.
Since $q(x),r(x)$ satisfy \eqref{eqn:A1} and are analytic near $x=\infty$, we can write
\begin{align*}
\bar{q}(y)=q_0y^2+q_1y^3+q_2y^4+q_3y^5+\ldots,\quad
\bar{r}(y)=r_0y^2+\ldots,
\end{align*}
where $q_j,r_0\in\Cset$, $j=0,1,2,3$, are constants.
Hence, the coefficients of the first-order derivatives $\eta_{jy}$, $j=1,2$,
 have a pole of second order at $y=0$,
 so that by a standard result on higher-order scalar linear differential equations
 (see, e.g., Theorem~7 of \cite{B00}),
 both equations in \eqref{eqn:eta} have an irregular singularity at $y=0$.
This immediately  yields the following on \eqref{eqn:v}.

\begin{lem}
\label{prop:4a}
Eq.~\eqref{eqn:v} has an irregular singularity at $y=0$.
\end{lem}

Letting $\zeta=\eta_{y}/\eta$ in the first equation of \eqref{eqn:eta},
 we have a Riccati equation
\begin{equation}
\zeta_{y}+\zeta^2
+\left(\frac{2}{y}+\frac{2ik}{y^2}-\frac{\bar{q}_y(y)}{\bar{q}(y)}\right)\zeta
 -\frac{\bar{q}(y)\bar{r}(y)}{y^4}=0.
\label{eqn:zeta}
\end{equation}
To obtain formal solutions to \eqref{eqn:zeta}, we write
\[
\zeta=\sum_{j=0}^\infty \zeta_{j}y^{j-2}.
\]
Substituting the above expression into \eqref{eqn:zeta}, we have
\begin{align*}
\zeta_0^2+2ik\zeta_0=0,\quad
2\zeta_0\zeta_{j}+2ik\zeta_j+s_j(\zeta_{0},\ldots,\zeta_{j-1})=0,\quad j\ge 1,
\end{align*}
where $s_j(\zeta_{0},\ldots,\zeta_{j-1})$ is a polynomial of $\zeta_{0},\ldots,\zeta_{j-1}$
 for $j\ge 1$.
In particular,
\begin{align*}
&
s_1(\zeta_0)=-2\zeta_0,\quad
s_2(\zeta_0,\zeta_1)=-\frac{q_1}{q_0}\zeta_0-\zeta_1+\zeta_1^2,\\
&
s_3(\zeta_0,\zeta_1,\zeta_2)
=\left(\frac{q_1^2}{q_0^2}-\frac{2q_2}{q_0}\right)\zeta_0
-\frac{q_1}{q_0}\zeta_1+2\zeta_1\zeta_2
\end{align*}
and
\begin{align*}
s_4(\zeta_0,\zeta_1,\zeta_2,\zeta_3)
=& -\left(\frac{q_1^3}{q_0^3}-\frac{3q_1q_2}{q_0^2}+\frac{3q_3}{q_0}\right)\zeta_0\\
&
+\left(\frac{q_1^2}{q_0^2}-\frac{2q_2}{q_0}\right)\zeta_1
+\frac{q_1}{q_0}\zeta_2+\zeta_2^2+\zeta_3+2\zeta_1\zeta_3-q_0r_0.
\end{align*}
Thus, we obtain two formal solutions to \eqref{eqn:zeta} as
\[
\zeta(y)=-\frac{2ik}{y^2}+\cdots,\quad
-\frac{ iq_0r_0}{2k}y^2+\cdots,
\]
which yield two formal solution to \eqref{eqn:v},
\begin{equation*}
w_1(y)=\exp\left(-\frac{ik}{y}\right)\bar{w}_{11}(y),\
 \exp\left(\frac{ik}{y}\right)\bar{w}_{12}(y),
\end{equation*}
where $\bar{w}_{1j}(y)$, $j=1,2$, are formal power series in $y$.
Hence, 
 we can express a formal fundamental matrix of \eqref{eqn:v} as
\begin{equation}
V(y)=\tilde{V}(y)e^{Q(y)},
\label{eqn:V}
\end{equation}
where $\tilde{V}(y)$ is a $2\times 2$ formal meromorphic invertible matrix of power series in $y$ and 
\begin{align*}
Q(y)=
\begin{pmatrix}
-ik/y & 0\\
0 & ik/y
\end{pmatrix}.
\end{align*}

On the other hand, using the Jost solutions,
 we have two fundamental matrices of \eqref{eqn:v},
\begin{equation}
\begin{pmatrix}
\phi(1/y;k), \bar{\phi}(1/y;k)
\end{pmatrix}\sim
\begin{pmatrix}
e^{-ik/y} & 0\\
0 & e^{ik/y}
\end{pmatrix}
\label{eqn:phi}
\end{equation}
as $y\to -0$ in a region containing $\Re y<0$, and
\begin{equation}
\begin{pmatrix}
\bar{\psi}(1/y;k),\psi(1/y;k)
\end{pmatrix}\sim
\begin{pmatrix}
e^{-ik/y} & 0\\
0 & e^{ik/y}
\end{pmatrix}
\label{eqn:psi}
\end{equation}
as $y\to+0$ in a region containing $\Re y>0$,
 where we have used \eqref{eqn:bc}.
These fundamental matrices have the form of \eqref{eqn:V}.

\begin{figure}
\includegraphics[scale=0.6]{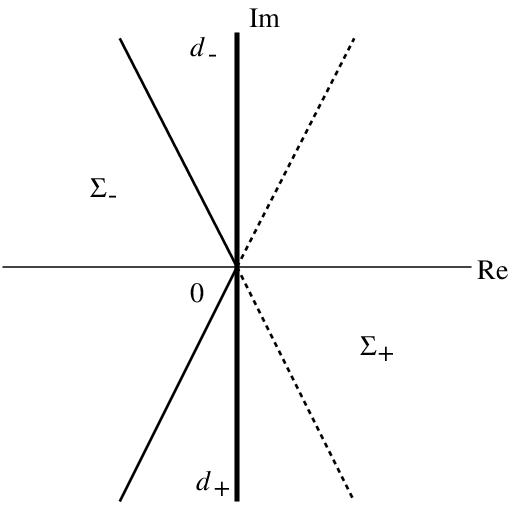}
\caption{Singular directions $d_\pm$ and sectors $\Sigma_\pm$:
The boundaries of $\Sigma_+$ and $\Sigma_-$ are plotted as solid and dotted lines..
\label{fig:1}}
\end{figure}

Substitution of $ye^{i\theta}$ into \eqref{eqn:V} yields
\[
V(ye^{i\theta})=\tilde{V}(ye^{i\theta})e^{Q(ye^{i\theta})},
\]
so that the \emph{formal monodromy} becomes
\[
\hat{M}=V(y)^{-1}V(ye^{2\pi i})=\id
\]
by $V(ye^{2\pi i})=V(y)\hat{M}$.
The \emph{exponential torus} is given by
\[
\T:=\mathrm{Gal}(\Cset((y))(e^{ik/y},e^{-ik/y})/\Cset((y)))
=\left\{
\begin{pmatrix}
c & 0\\
0 & c^{-1}
\end{pmatrix}
\bigg| c\in\Cset^\ast\right\},
\]
where $\Cset((y))$ denotes the field of the formal Laurent series over $\Cset$.
Here $\mathrm{Gal}(\Lset/\Kset)$ denotes the differential Galois group
 for a differential field extension $\Lset\supset\Kset$,
 where $\Kset$ is a differential field.
The \emph{Stokes directions} on which $\Re(\mp ik/y)=0$, are given by $\arg y=0,\pi$.

We specifically assume that $k>0$.
The case of $k<0$ can be treated similarly.
We have $\Re(-ik/y)<0$ and $\Re(ik/y)<0$, respectively.
 for $\arg y\in(0,\pi)$ and $(\pi,2\pi)$,
 which are called \emph{negative Stokes pairs}.
So the \emph{singular directions} $d_\mp$,
 which are bisectors of the negative Stokes pairs
 $(\tfrac{1}{2}\pi\mp\tfrac{1}{2}\pi,\tfrac{3}{2}\pi\mp\tfrac{1}{2}\pi)$,
 become $\arg y=\pi\mp\tfrac{1}{2}\pi$.
See Fig.~\ref{fig:1}.

Let $a(k),\bar{a}(k)\neq 0$.
Using an argument in Section~4 of \cite{Y23b}, we can write the Stokes matrices as
\[
S_-=\begin{pmatrix}
1 & \alpha_- \\
0 & 1
\end{pmatrix}
\quad\mbox{and}\quad
S_+=\begin{pmatrix}
1 & 0\\
\alpha_+ & 1
\end{pmatrix}
\]
for the singular directions $d_-$ and $d_+$, respectively,
 where $\alpha_\pm$ are constants which may be zero.
Moreover, with the assistance of \eqref{eqn:phi} and \eqref{eqn:psi},
 we can prove that if $\alpha_-$ and $\alpha_+=0$, then $b(-k)$ and $b(k)$, respectively,
 as in Lemma~4.1 of \cite{Y23b}.

Ramis' theorem \cite{L94,PS03,RM90} (see also Theorem~A.2 of \cite{Y23b})
 implies that the differential Galois group $G$ of \eqref{eqn:ZS2} contains
 the Zariski closure of a group generated by the formal monodromy $M=\id$,
 the exponential torus $\T$ and the Stokes matrices $S_\pm$.
Recall that $G\subset\SL(2,\Cset)$.
In particular, $G$ is not finite,
 i.e., cases (i) and (ii) of Proposition~\ref{prop:2c} do not occur,
 and obviously case (iv)  of Proposition~\ref{prop:2c} does not occur.
Hence, if Eq.~\eqref{eqn:ZS2} is integrable in the sense of differential Galois theory,
 then we have $b(k)$ or $\bar{b}(k)=0$
 since one of case~(iii) or (v) of Proposition~\ref{prop:2c} occurs.
So we have the following.

\begin{lem}
\label{prop:4b}
Let $k\in\Rset^\ast$ and assume that $a(k),\bar{a}(k)\neq 0$.
If Eq.~\eqref{eqn:ZS2} is solvable by quadrature, then $b(k)$ or $\bar{b}(k)=0$.
\end{lem}

\begin{proof}[Proof of Theorem~$\ref{thm:main2}$]
Suppose that Eq.~\eqref{eqn:ZS2} is solvable by quadrature for any $k\in\Rset^\ast$.
Since by Proposition~\ref{prop:2a}(iii) $a(k)$ or $\bar{a}(k)=0$ only at discrete points,
 it follows from Lemma~\ref{prop:4b} that $b(k)$ or $\bar{b}(k)=0$
 except at the discrete points.
Hence, by the identity theorem (e.g., Theorem~3.2.6 of \cite{AF03})
 and Proposition~\ref{prop:2b},
 we have $b(k),\bar{b}(k)=0$ for any $k\in\Rset^\ast$
 since by Proposition~\ref{prop:2a}(i) $b(k),\bar{b}(k)$ are analytic
 in a neighborhood of $\Rset^\ast$ in $\Cset^\ast$.
Thus, we complete the proof.
\end{proof}



\section{Special Rational Potentials}

Finally, we consider the case in which $q(x),r(x)$ are rational functions
 and satisfy \eqref{eqn:A1a} but do not \eqref{eqn:A1}.
Let the degrees of the numerator and denominator of $q(x)$
 be the same as those of $r(x)$ and denoted by $m_1$ and $m_2$, respectively.
In such a situation, the following result was proved
 for the linear Schr\"odinger equation \eqref{eqn:SE} in \cite{Y23b}.

\begin{thm}
\label{thm:KdV}
If $m_2-m_1=1$,
 then Eq.~\eqref{eqn:SE} is not solvable by quadrature for some $k\in\Rset^\ast$.
\end{thm}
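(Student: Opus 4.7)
The plan is to run the Ramis-theoretic argument of Section~4 at the irregular singularity of \eqref{eqn:SE} at $x=\infty$. Since $q(x)$ is rational with $m_2-m_1=1$, we have $q(x)=c/x+O(1/x^2)$ at infinity with $c\neq 0$, and it is this long-range Coulomb-type tail (the reason \eqref{eqn:A1} fails) that will enlarge the differential Galois group to $\SL(2,\Cset)$ at some $k\in\Rset^\ast$.

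First I would set $y=1/x$, converting \eqref{eqn:SE} into
\[
v_{yy}+\frac{2}{y}\,v_y+\frac{k^2+\tilde q(y)}{y^4}\,v=0,\qquad \tilde q(y)=cy+O(y^2),
\]
with an irregular singularity at $y=0$. A Riccati ansatz $\zeta=v_y/v$, matched at orders $y^{-4}$ and $y^{-3}$, gives $\zeta_{-2}=\pm ik$ and $\zeta_{-1}=\pm ic/(2k)$ (signs linked), hence two formal solutions
\[
v_\pm(y)=y^{\pm ic/(2k)}e^{\mp ik/y}\bigl(1+O(y)\bigr),
\]
and a formal fundamental matrix $V(y)=\tilde V(y)\,y^L e^{Q(y)}$ with $Q=\mathrm{diag}(-ik/y,ik/y)$ and $L=\mathrm{diag}(ic/(2k),-ic/(2k))$. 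From this I would read off the Ramis data exactly as in Section~4: the formal monodromy $M=e^{2\pi iL}=\mathrm{diag}(e^{-\pi c/k},e^{\pi c/k})$, the exponential torus $\T=\{\mathrm{diag}(t,t^{-1}):t\in\Cset^\ast\}$, and, along the two singular directions $\arg y=\pi/2,3\pi/2$, a pair of Stokes matrices $S_\pm$ of opposite triangular shape. Ramis's theorem gives that the Galois group $G$ of \eqref{eqn:SE} contains the Zariski closure of $\langle M,\T,S_+,S_-\rangle$; therefore, if for some $k\in\Rset^\ast$ both $S_+$ and $S_-$ have a non-zero off-diagonal entry, $G$ contains $\T$ together with a non-trivial upper and a non-trivial lower unipotent, forcing $G=\SL(2,\Cset)$ by Proposition~\ref{prop:2c}(vi), which means \eqref{eqn:SE} is not solvable by quadrature at that $k$.

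The main obstacle will be producing at least one such $k$. In Section~4 the analogous step was immediate because each Stokes matrix was directly identified with a ZS scattering coefficient $b(k)$ or $\bar b(k)$ through Jost solutions with clean plane-wave asymptotics, but the tail $c/x$ replaces those asymptotics with logarithmically corrected waves $e^{\mp ikx}|2kx|^{\mp ic/(2k)}$, so the Jost solutions of \eqref{eqn:bc} do not exist and the usual scattering coefficients are undefined. I would replace them by suitably normalized modified Jost solutions carrying the logarithmic phase and derive an identity analogous to \eqref{eqn:thm2} expressing $S_\pm$ as connection coefficients between these modified solutions at $x=\pm\infty$, then argue by contradiction: if for every $k\in\Rset^\ast$ at least one of $S_\pm(k)$ were trivial, analyticity in $k$ would force one of them to vanish identically, making $v_+$ (say) a single-valued analytic solution of \eqref{eqn:SE} on a punctured neighborhood of $x=\infty$ for every real $k$, with local exponent $ic/(2k)$ varying non-trivially in $k$; a residue calculation at the finite poles of the rational $q$ then rules this out unless $c=0$, contradicting $m_2-m_1=1$.
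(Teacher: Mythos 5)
The paper does not prove Theorem~\ref{thm:KdV} here at all: it is quoted from \cite{Y23b}, where it is established by running Kovacic's algorithm on the scalar equation $v_{xx}=s(x)v$ with $s=-(k^2+q)$ rational, the decisive point being that the poles of $s$ and their local exponent data do not depend on $k$. Your route --- Ramis theory at the irregular singularity $x=\infty$ --- is therefore genuinely different from the source's, and your local computations there are essentially right: the exponents $e^{\mp ikx}x^{\mp ic/(2k)}$, the formal monodromy $\mathrm{diag}(e^{-\pi c/k},e^{\pi c/k})$, the diagonal exponential torus, and the two singular directions all come out as you say, and it is correct that $G=\SL(2,\Cset)$ follows once \emph{both} Stokes matrices are nontrivial.

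The gap is precisely the step you flag as ``the main obstacle,'' and the sketch you give for it does not close. First, triviality of one Stokes matrix does not make $v_+$ ``a single-valued analytic solution on a punctured neighborhood of $x=\infty$'': even with both Stokes matrices trivial the actual solutions still carry the factor $x^{\pm ic/(2k)}$ from the formal monodromy, so they are multivalued for generic real $k$ regardless of the Stokes data; the dichotomy you want to contradict is not single-valuedness but the vanishing of one off-diagonal Stokes multiplier, which only says that one formal solution is summable on a sector of opening greater than $\pi$. Second, the ``residue calculation at the finite poles of $q$'' that is supposed to derive a contradiction is not specified, and it is exactly where all the difficulty is concentrated: relating the Stokes data at $\infty$ to the local monodromies at the finite poles of $q$ requires a global connection analysis, and this is the global information that Kovacic's algorithm is designed to replace by purely local necessary conditions. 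Third, even the preliminary identification of $S_\pm$ with connection coefficients of ``modified Jost solutions'' needs work: with $m_2-m_1=1$ condition \eqref{eqn:A1} fails, so the existence theorem invoked throughout the paper (Theorem~8.1 of \cite{CL55}) does not apply, and when $q$ has poles on $\Rset$ one must also redo the contour deformation of Sections~2--3; neither construction is supplied. You should also note that the analyticity in $k$ of the Stokes multipliers, which your ``identity theorem'' step relies on, is asserted without justification. As it stands the argument establishes only that $G$ is not solvable \emph{if} both Stokes matrices are nontrivial for some $k$, which is a reformulation of the theorem rather than a proof of it.
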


Obviously, if $m_2-m_1>1$, then the rational functions $q(x),r(x)$ satisfy \eqref{eqn:A1}
 and condition~\textrm{(A${}_1$)}.
The proof of Theorem~\ref{thm:KdV} in \cite{Y23b} relied on Kovacic's algorithm \cite{K86},
 which determine the solvability of second-order linear differential equations of the form 
\[
v_{xx}=s(x)v
\]
by quadrature, where $s(x)$ is a rational function, and poles of $s(x)$,
 which do not depend on $k$ in the linear Schr\"odinger equation \eqref{eqn:SE},
 play a key role there.
It is difficult to obtain such a result as Theorem~\ref{thm:KdV} for the ZS system \eqref{eqn:ZS2}.
In the following we explain its reason. 

Letting
\[
w=\begin{pmatrix}
e^{-ikx} & 0\\
0 & e^{ikx}
\end{pmatrix}\eta,
\]
we rewrite \eqref{eqn:ZS2} as
\begin{equation*}
\eta_x=
\begin{pmatrix}
0 & e^{2ikx}q(x)\\
e^{-2ikx}r(x) & 0
\end{pmatrix}\eta,
\end{equation*}
or equivalently,
\begin{equation}
\begin{split}
&
\eta_{1xx}-\left(2ik+\frac{q_x(x)}{q(x)}\right)\eta_{1x}-q(x)r(x)\eta_1=0,\\
&
\eta_{2xx}+\left(2ik-\frac{r_x(x)}{r(x)}\right)\eta_{2x}-q(x)r(x)\eta_2=0,
\end{split}
\label{eqn:eta2}
\end{equation}
where $\eta_j$ represents the $j$th component of $\eta$ for $j=1,2$.
Letting
\[
\zeta=\eta_1\exp\left(-\tfrac{1}{2}\int\left(2ik+\frac{q_x(x)}{q(x)}\right)\d x\right)
\]
in the first equation of \eqref{eqn:eta2}, we have
\begin{equation}
\zeta_{xx}+\left(k^2-iku_1(x)+u_2(x)\right)\zeta=0,
\label{eqn:zeta2}
\end{equation}
where
\[
u_1(x)=\frac{q_x(x)}{q(x)},\quad
u_2(x)=\frac{q_{xx}(x)}{2q(x)}-\frac{3q_x(x)^2}{4q(x)^2}-q(x)r(x).
\]
Since its degrees of the numerator and denominator for $u_1(x)$
 are, respectively, $m_1+2m_2-1$ and $m_1+2m_2$,
 its order at $x=\infty$ is $(m_1+2m_2)-(m_1+2m_2-1)=1$ for any $m_1<m_2$,
 while the order of $u_2(x)$ at $x=\infty$ is $\min(2(m_2-m_1),2)>1$.
In particular, poles of the coefficient of $\zeta$ in \eqref{eqn:zeta2}
 depend on $k$ in general.
This yields a difficulty in getting a result similar to Theorem~\ref{thm:KdV}
 for the ZS system \eqref{eqn:ZS2}.
A  similar argument can be applied to the second equation of \eqref{eqn:eta2}.

\section*{Acknowledgements}
This work was partially supported by the JSPS KAKENHI Grant Number JP22H01138.

\end{document}